\newtheorem{thm}{Theorem}[section]
\newtheorem{prop}[thm]{Proposition}
\newtheorem{lem}[thm]{Lemma}
\theoremstyle{definition}
\newtheorem{defn}[thm]{Definition}
\newcounter{labelflag} \setcounter{labelflag}{0}
\newcommand{\Label}[1]{
                       \ifnum\thelabelflag=1
                          \ifmmode
                             \makebox[0in][l]{\qquad\fbox{\rm#1}}
                          \else
                             \marginpar{\vspace{0.7\baselineskip}
                                        \hspace{-1.1\textwidth}
                                        \fbox{\rm#1}}
                          \fi
                       \fi
                       \label{#1}
                      }
\newcommand{\be}{\begin{equation}}
\newcommand{\ee}{\end{equation}}
\newcommand{\vt}{{\tilde{v}}}
\newcommand{\ut}{{\tilde{u}}}
\newcommand{\h}{L^2(\R^n)}
\newcommand{\hone}{H^1(\R^n)}
\newcommand{\ii}{\int_{\R^n}}
\newcommand{\zto}{z_1(\theta_t \omega) }
\newcommand{\ztotwo}{z_2(\theta_t \omega) }
\newcommand{\rh }{\rho ({\frac {|x|^2}{k^2}})}
\newcommand{\rhp }{\rho^\prime ({\frac {|x|^2}{k^2}})}
 \newcommand{\R}{\mathbb{R}}
\begin{document}

\begin{titlepage}
\title{\Large\bf   Random Attractors for  the  Stochastic
FitzHugh-Nagumo System
on Unbounded Domains}
\vspace{7mm}

\author{
Bixiang Wang  \thanks {Supported in part by NSF  grant DMS-0703521}
\vspace{1mm}\\
Department of Mathematics, New Mexico Institute of Mining and
Technology \vspace{1mm}\\ Socorro,  NM~87801, USA \vspace{1mm}\\
Email: bwang@nmt.edu}
\date{}
\end{titlepage}

\maketitle

\medskip

\begin{abstract}
The existence of a random attractor for the 
stochastic FitzHugh-Nagumo system
defined on an unbounded domain is established.
The pullback
   asymptotic compactness of the  stochastic  system
   is proved  by uniform estimates on solutions 
   for large space and time variables. These estimates 
   are obtained by a cut-off  technique.
\end{abstract}

{\bf Key words.}    Stochastic FitzHugh-Nagumo system,    random attractor, pullback attractor,
asymptotic compactness.

 {\bf MSC 2000.} Primary 37L55.  Secondary  60H15, 35B40,  35B41.

%%%%%%%%%%%%%%%%%%%%%%%%%%%%%%%%%%%%%%%%%%%%%%%%%%%%%%%%
%  Running title, etc.
%%%%%%%%%%%%%%%%%%%%%%%%%%%%%%%%%%%%%%%%%%%%%%%%%%%%%%%%
%

%%%%%%%%%%%%%%%%%%%%%%%%%%%%%%%%%%%%%%%%%%%%%%%%%%%%%%%%
%  Body of article
%%%%%%%%%%%%%%%%%%%%%%%%%%%%%%%%%%%%%%%%%%%%%%%%%%%%%%%%
%
 %\renewcommand{\baselinestretch}{1.5}
%

\baselineskip=1.5\baselineskip

\section{Introduction}
\setcounter{equation}{0}
In this paper, we investigate the asymptotic behavior of
solutions of the  following
  stochastic FitzHugh-Nagumo system   defined  on $\R^n$:
$$
du + (\lambda u - \Delta u + \alpha v )dt = (f(x,u) + g) dt +  \phi_1 dw_1,
\quad  x \in \R^n, \ \ t >0,
$$
$$
 dv + (\delta v - \beta u) dt =  h dt + \phi_2 d w_2,
 \quad  x \in \R^n, \ \ t >0,
$$
 where $\lambda, \alpha, \delta$ and $ \beta$  are    positive constants,
 $g \in L^2(\R^n) $ and $h \in \hone$  are  given,
 $\phi_1 \in H^2(\R^n) \cap W^{2, p}(\R^n)$
for some  $p\ge 2$,   $\phi_2 \in \hone$,
$f$ is a nonlinear  function satisfying  certain dissipative conditions,
$w_1$ and $w_2$   are independent two-sided real-valued
 Wiener processes on a probability
 space which will be specified later.

 It is known that the asymptotic  behavior of  a
  random system is 
 determined  
by a pullback random attractor. The concept of
random attractors was introduced in \cite{cra2, fla1}, 
which is  an analogue of
 global attractors for deterministic
dynamical systems as studied in  
   \cite{bab1, hal1, rob1, sel1, tem1}.
   When PDEs are defined in {\it  bounded} domains,
   the existence of random attractors   has been 
   investigated 
    by many  authors, see, e.g.,  \cite{arn1, car1, cra1, cra2, fla1}
and the references therein.
However, in the case of {\it unbounded} domains, 
the existence of random attractors is not well understood.
  It seems that  the work 
  \cite{bat2}
  by Bates, Lu and Wang is the only existence result 
  of   random attractors
  for PDEs   defined  on  unbounded domains, where  a random attractor 
   for   the  Reaction-Diffusion equation  on $\R^n$
   was established.
  In this paper, we will  study 
     such attractors  for the stochastic FitzHugh-Nagumo
system defined on unbounded domains.

Notice that 
 Sobolev embeddings are not  compact
 when domains are unbounded.   
This
introduces a major obstacle  for proving the existence of   attractors for PDEs on unbounded domains.
For some 
    deterministic PDEs,   such   difficulty  
     can be overcome
by  the
  energy equation approach that was   developed by  Ball in
\cite{bal1, bal2} and used in 
  \cite{ghi1,  gou1, ju1, moi2, moi1,  ros1, wanx}).
  Under certain circumstances, 
  the
   tail-estimates method   must be   used  to deal with
    the problem caused by the unboundedness of domains.
      This  approach  was  developed  in \cite{wan1}
      for deterministic parabolic equations 
   and used in
    \cite{ant1, ant2, arr1,   mor1, pri1, rod1, sta1, sun1}.
    Recently, the authors of \cite{bat2} extended the
    tail-estimates method to the stochastic parabolic  equations.
    In this  paper, we will use  the idea of uniform estimates 
    on the tails of solutions   
    to investigate the asymptotic behavior of the  stochastic FitzHugh-Nagumo system defined  on $\R^n$.

 This paper is organized  as follows. In the next section, we
 review the  pullback
 random attractors  theory  for random dynamical systems. In Section 3,
 we define  a continuous  random dynamical system for the
 stochastic FitzHugh-Nagumo  system  on $\R^n$.
 Then we derive the uniform estimates of solutions  in Section 4, which include
 the uniform estimates on the tails of solutions.
Finally,  we   establish the asymptotic compactness of
the random dynamical system and prove
  the existence of   a  pullback random attractor.

In the sequel,  we adopt  the following notations.  We denote by
$\| \cdot \|$ and $(\cdot, \cdot)$ the norm and the inner product
of
  $L^2(\R^n)$, respectively.    The
norm of  a given  Banach space $X$  is written as    $\|\cdot\|_{X}$.
We also use $\| \cdot\|_{p}$    to denote   the norm  of
$L^{p}(\R^n)$.  The letters $c$ and $c_i$ ($i=1, 2, \ldots$)
are  generic positive constants  which may change their  values from line to
line or even in the same line.

\section{Preliminaries}
\setcounter{equation}{0}

In this section, we recall some basic concepts
related to random attractors for stochastic dynamical
systems. The reader is referred to \cite{arn1, bat1, cra1, fla1} for more details.

Let  $(X, \| \cdot \|_X)$ be a   separable
Hilbert space with Borel $\sigma$-algebra $\mathcal{B}(X)$,
 and
$(\Omega, \mathcal{F}, P)$  be  a probability space.

\begin{defn}
$(\Omega, \mathcal{F}, P,  (\theta_t)_{t\in \R})$
is called a  metric   dynamical  system
if $\theta: \R \times \ \Omega \to \Omega$ is
$(\mathcal{B}(\R) \times \mathcal{F}, \mathcal{F})$-measurable,
$\theta_0$ is the identity on $\Omega$,
$\theta_{s+t} = \theta_t \circ  \theta_s$ for all
$s, t \in \R$ and $\theta_t P = P$ for all $t \in \R$.
 \end{defn}

\begin{defn}
\label{RDS}
A continuous random dynamical system (RDS)
on $ X  $ over  a metric  dynamical system
$(\Omega, \mathcal{F}, P,  (\theta_t)_{t\in \R})$
is  a mapping
  $$
\Phi: \R^+ \times \Omega \times X \to X \quad (t, \omega, x)
\mapsto \Phi(t, \omega, x),
$$
which is $(\mathcal{B}(\R^+) \times \mathcal{F} \times \mathcal{B}(X), \mathcal{B}(X))$-measurable and
satisfies, for $P$-a.e.  $\omega \in \Omega$,

(i) \  $\Phi(0, \omega, \cdot) $ is the identity on $X$;

(ii) \  $\Phi(t+s, \omega, \cdot) = \Phi(t, \theta_s \omega,
\cdot) \circ \Phi(s, \omega, \cdot)$ for all $t, s \in \R^+$;

(iii) \  $\Phi(t, \omega, \cdot): X \to  X$ is continuous for all
$t \in  \R^+$.
\end{defn}

Hereafter, we always assume that $\Phi$  is a continuous RDS on $X$
over $(\Omega, \mathcal{F}, P,  (\theta_t)_{t\in \R})$.

\begin{defn}
 A random  bounded set $\{B(\omega)\}_{\omega \in \Omega}$
 of  $  X$  is called  tempered
 with respect to $(\theta_t)_{t\in \R}$ if for $P$-a.e. $\omega \in \Omega$,
 $$ \lim_{t \to \infty} e^{- \beta t} d(B(\theta_{-t} \omega)) =0
 \quad \mbox{for all} \  \beta>0,
 $$
 where $d(B) =\sup_{x \in B} \| x \|_{X}$.
\end{defn}

\begin{defn}
Let $\mathcal{D}$ be a collection of  random  subsets of $X$.
Then  $\mathcal{D}$ is called inclusion-closed if
   $D=\{D(\omega)\}_{\omega \in \Omega} \in {\mathcal{D}}$
and  $\tilde{D}=\{\tilde{D}(\omega) \subseteq X:  \omega \in \Omega\} $
with
  $\tilde{D}(\omega) \subseteq D(\omega)$ for all $\omega \in \Omega$ imply
  that  $\tilde{D} \in {\mathcal{D}}$.
  \end{defn}

\begin{defn}
Let $\mathcal{D}$ be a collection of random subsets of $X$ and
$\{K(\omega)\}_{\omega \in \Omega} \in \mathcal{D}$. Then
$\{K(\omega)\}_{\omega \in \Omega} $ is called an absorbing set of
$\Phi$ in $\mathcal{D}$ if for every $B \in \mathcal{D}$ and
$P$-a.e. $\omega \in \Omega$, there exists $t_B(\omega)>0$ such
that
$$
\Phi(t, \theta_{-t} \omega, B(\theta_{-t} \omega)) \subseteq
K(\omega) \quad \mbox{for all} \ t \ge t_B(\omega).
$$
\end{defn}

\begin{defn}
Let $\mathcal{D}$ be a collection of random subsets of $X$. Then
$\Phi$ is said to be  $\mathcal{D}$-pullback asymptotically
compact in $X$ if  for $P$-a.e. $\omega \in \Omega$,
$\{\Phi(t_n, \theta_{-t_n} \omega,
x_n)\}_{n=1}^\infty$ has a convergent  subsequence  in $X$
whenever
  $t_n \to \infty$, and $ x_n\in   B(\theta_{-t_n}\omega)$   with
$\{B(\omega)\}_{\omega \in \Omega} \in \mathcal{D}$.
\end{defn}

\begin{defn}
Let $\mathcal{D}$ be a collection of random subsets of $X$
and $\{\mathcal{A}(\omega)\}_{\omega \in \Omega} \in  \mathcal{D}$.
Then   $\{\mathcal{A}(\omega)\}_{\omega \in \Omega} $
is called a   $\mathcal{D}$-random   attractor
(or $\mathcal{D}$-pullback attractor)  for
  $\Phi$
if the following  conditions are satisfied, for $P$-a.e. $\omega \in \Omega$,

(i) \  $\mathcal{A}(\omega)$ is compact,  and
$\omega \mapsto d(x, \mathcal{A}(\omega))$ is measurable for every
$x \in X$;

(ii) \ $\{\mathcal{A}(\omega)\}_{\omega \in \Omega}$ is invariant, that is,
$$ \Phi(t, \omega, \mathcal{A}(\omega)  )
= \mathcal{A}(\theta_t \omega), \ \  \forall \   t \ge 0;
$$

(iii) \ \ $\{\mathcal{A}(\omega)\}_{\omega \in \Omega}$
attracts  every  set  in $\mathcal{D}$,  that is, for every
 $B = \{B(\omega)\}_{\omega \in \Omega} \in \mathcal{D}$,
$$ \lim_{t \to  \infty} d (\Phi(t, \theta_{-t}\omega, B(\theta_{-t}\omega)), \mathcal{A}(\omega))=0,
$$
where $d$ is the Hausdorff semi-metric given by
$d(Y,Z) =
  \sup_{y \in Y }
\inf_{z\in  Z}  \| y-z\|_{X}
 $ for any $Y\subseteq X$ and $Z \subseteq X$.
\end{defn}

The following existence result on a    random attractor
for a  continuous  RDS
can be found in \cite{bat1,  fla1}.

\begin{prop}
\label{att} Let $\mathcal{D}$ be an inclusion-closed
 collection of random subsets of
$X$ and $\Phi$ a continuous RDS on $X$ over $(\Omega, \mathcal{F},
P,  (\theta_t)_{t\in \R})$. Suppose  that $\{K(\omega)\}_{\omega
\in \Omega} $ is a closed  absorbing set of  $\Phi$  in $\mathcal{D}$
and $\Phi$ is $\mathcal{D}$-pullback asymptotically compact in
$X$. Then $\Phi$ has a unique $\mathcal{D}$-random attractor
$\{\mathcal{A}(\omega)\}_{\omega \in \Omega}$ which is given by
$$\mathcal{A}(\omega) =  \bigcap_{\tau \ge 0} \  \overline{ \bigcup_{t \ge \tau} \Phi(t, \theta_{-t} \omega, K(\theta_{-t} \omega)) }.
$$
\end{prop}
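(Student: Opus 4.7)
\medskip

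\noindent\textbf{Proof plan.} I plan to take $\mathcal{A}(\omega)$ to be the set given by the stated formula and to verify the three attractor properties (compactness plus measurability, invariance, attraction) in order. The conceptual backbone is the equivalent sequential description
$$
\mathcal{A}(\omega) = \bigl\{ y \in X : \exists\, t_n \to \infty,\ \exists\, x_n \in K(\theta_{-t_n}\omega),\ \Phi(t_n,\theta_{-t_n}\omega,x_n) \to y \bigr\},
$$
which is routine to extract from the closure/intersection definition and is the form in which pullback asymptotic compactness is directly used. Non-emptiness follows by picking any $t_n\to\infty$ and any $x_n\in K(\theta_{-t_n}\omega)$ and invoking asymptotic compactness (available since $\{K(\omega)\}\in\mathcal{D}$) to extract a convergent subsequence whose limit lies in $\mathcal{A}(\omega)$. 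For compactness of $\mathcal{A}(\omega)$, given $y_k\in\mathcal{A}(\omega)$ I would approximate each $y_k$ by some $\Phi(t^{(k)}_{n_k},\theta_{-t^{(k)}_{n_k}}\omega,x^{(k)}_{n_k})$, diagonalize so that $t^{(k)}_{n_k}\to\infty$, and apply asymptotic compactness once more.

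Next, the invariance $\Phi(t,\omega,\mathcal{A}(\omega))=\mathcal{A}(\theta_t\omega)$ splits into two inclusions. For $\subseteq$, if $y=\lim\Phi(t_n,\theta_{-t_n}\omega,x_n)$ with $x_n\in K(\theta_{-t_n}\omega)$, then the cocycle identity yields $\Phi(t,\omega,\Phi(t_n,\theta_{-t_n}\omega,x_n))=\Phi(t+t_n,\theta_{-(t+t_n)}(\theta_t\omega),x_n)$, and continuity of $\Phi(t,\omega,\cdot)$ places $\Phi(t,\omega,y)$ in $\mathcal{A}(\theta_t\omega)$. For $\supseteq$, given $z\in\mathcal{A}(\theta_t\omega)$ expressed as $\lim\Phi(t+r_n,\theta_{-r_n}\omega,y_n)$ with $r_n\to\infty$ and $y_n\in K(\theta_{-r_n}\omega)$, the cocycle factors out $\Phi(t,\omega,\cdot)$; asymptotic compactness yields a subsequential limit $\Phi(r_n,\theta_{-r_n}\omega,y_n)\to\zeta\in\mathcal{A}(\omega)$, and continuity forces $z=\Phi(t,\omega,\zeta)$.

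The main step, and the one I expect to be the key obstacle, is the attraction property. I would argue by contradiction: if some $B\in\mathcal{D}$ is not attracted, there exist $\varepsilon>0$, $t_n\to\infty$ and $x_n\in B(\theta_{-t_n}\omega)$ with $d(\Phi(t_n,\theta_{-t_n}\omega,x_n),\mathcal{A}(\omega))\ge\varepsilon$. Fix an arbitrary $T_0>0$ and use the cocycle to write
$$
\Phi(t_n,\theta_{-t_n}\omega,x_n) = \Phi(T_0,\theta_{-T_0}\omega,z_n), \qquad z_n := \Phi(t_n-T_0,\theta_{-t_n}\omega,x_n).
$$
Applying the absorbing property at the fiber $\theta_{-T_0}\omega$, and using the identity $\theta_{-(t_n-T_0)}(\theta_{-T_0}\omega)=\theta_{-t_n}\omega$, one gets $z_n\in K(\theta_{-T_0}\omega)$ for all $n$ with $t_n-T_0\ge t_B(\theta_{-T_0}\omega)$; the crucial feature exploited here is that $t_B$ depends on the target fiber only, not on the source $\theta_{-t_n}\omega$. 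Hence the orbit lies eventually in $\bigcup_{s\ge T_0}\Phi(s,\theta_{-s}\omega,K(\theta_{-s}\omega))$. Asymptotic compactness (applicable to $B\in\mathcal{D}$) furnishes a subsequential limit $y^{*}$ which belongs to the closure of this union for every $T_0$, hence to $\mathcal{A}(\omega)$, contradicting $\ge\varepsilon$.

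Finally, measurability of $\omega\mapsto d(x,\mathcal{A}(\omega))$ is inherited from the joint measurability of $\Phi$ together with the sequential description above, via the standard measurable-selection arguments for $\omega$-limit sets of measurable random closed sets, which I would quote from \cite{cra1,fla1} rather than reprove. Uniqueness is then a formal consequence of invariance plus attraction: any other $\mathcal{D}$-random attractor $\mathcal{A}'$ lies in $\mathcal{D}$, so $\mathcal{A}$ attracts $\mathcal{A}'$; invariance of $\mathcal{A}'$ together with its compactness gives $\mathcal{A}'\subseteq\mathcal{A}$, and swapping the roles gives equality. The entire argument thus rests on the sequential $\omega$-limit characterization and on the cocycle factorization in the attraction step, the latter being the single genuinely non-formal ingredient.
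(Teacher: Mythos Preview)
The paper does not supply its own proof of this proposition; it is quoted as a known result with the sentence ``The following existence result on a random attractor for a continuous RDS can be found in \cite{bat1, fla1}.'' Your plan is correct and is essentially the standard argument given in those references: the sequential (pullback $\omega$-limit) description of $\mathcal{A}(\omega)$, compactness via asymptotic compactness and diagonalization, invariance from the cocycle identity plus continuity, attraction by the contradiction/absorption/factorization argument you outline, and uniqueness from mutual attraction of two invariant families. There is nothing to compare against in the present paper beyond the citation, and your outline matches the cited proofs.
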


 In this paper, we will take $\mathcal{D}$ as
 the collection of all tempered  random subsets of $\h\times \h$ and prove the
 stochastic FitzHugh-Nagumo system  on $\R^n$ has
a $\mathcal{D}$-random attractor.

\section{Stochastic FitzHugh-Nagumo system
on  $\R^n$  }
\setcounter{equation}{0}

In this section, we discuss the existence of a continuous random dynamical system
for the stochastic FitzHugh-Nagumo system  defined  on $\R^n$:
 \be
\label{rd1}
du + (\lambda u - \Delta u + \alpha v )dt = (f(x,u) + g) dt +  \phi_1 dw_1,
\quad  x \in \R^n, \ \ t >0,
 \ee
 \be
 \label{rd1_a1}
 dv + (\delta v - \beta u) dt =  h dt + \phi_2 d w_2,
 \quad  x \in \R^n, \ \ t >0,
 \ee
 with the initial conditions:
 \be
 \label{rd2}
 u(x, 0) =u_0(x),  \quad v(x,0) = v_0 (x), \quad x \in \R^n,
 \ee
 where $\lambda, \alpha, \delta$ and $ \beta$  are    positive constants,
 $g \in L^2(\R^n) $ and $h \in \hone$  are  given,
 $\phi_1 \in H^2(\R^n) \cap W^{2, p}(\R^n)$
for some  $p\ge 2$,  $  \phi_2   \in \hone$,
 $w_1$ and $w_2$   are independent two-sided real-valued
 Wiener processes on a probability
 space which will be specified below, and
   $f$ is a nonlinear  function   satisfying the conditions,
 $ \forall x \in  R^n$ and $ s \in R$,
\begin{equation}
\label{f1}
f(x, s) s \le -  \alpha_1 |s|^p + \psi_1(x),
\end{equation}
\begin{equation}
\label{f2}
|f(x, s) |   \le \alpha_2 |s|^{p-1} + \psi_2 (x),
\end{equation}
\begin{equation}
\label{f3}
{\frac {\partial f}{\partial s}} (x, s)   \le \beta,
\end{equation}
\begin{equation}
\label{f4}
| {\frac {\partial f}{\partial x}} (x, s) | \le  \psi_3(x),
\end{equation}
where $\alpha_1$, $\alpha_2$ and    $\beta$
 are positive constants,
$\psi_1 \in L^1(R^n) \cap L^\infty(R^n)$, and $\psi_2 \in L^2(R^n) \cap L^q(\R^n) $
with ${\frac 1q} + {\frac 1p} =1$, and $\psi_3 \in L^2(\R^n)$.

In the sequel, we consider the probability space
$(\Omega, \mathcal{F}, P)$ where
$$
\Omega = \{ \omega =(\omega_1, \omega_2)  \in C(\R, \R^2): \ \omega(0) =  0 \},
$$
$\mathcal{F}$ is the Borel $\sigma$-algebra induced by the compact-open topology
of $\Omega$, and $P$ the corresponding Wiener measure on
$(\Omega, \mathcal{F})$.  Then we have
$$
 (w_1(t, \omega),  w_2(t, \omega) ) = \omega (t), \quad  t \in \R.
$$
 Define the time shift by
$$ \theta_t \omega (\cdot) = \omega (\cdot +t) - \omega (t), \quad  \omega \in \Omega, \ \ t \in \R .
$$
Then $(\Omega, \mathcal{F}, P, (\theta_t)_{t\in \R})$
is a measurable dynamical  system.

Consider the stationary solutions of the one-dimensional
equations
\be
\label{y1}
dy_1 + \lambda y_1 dt = d w_1(t),
\ee
and
\be
\label{y2}
dy_2 + \delta  y_2 dt = d w_2(t).
\ee
The solutions to \eqref{y1} and \eqref{y2}  are  given by
$$
y_1 (\theta_t \omega_1)= -\lambda \int^0_{-\infty} e^{\lambda \tau}  ( \theta_t \omega_1 )
(\tau) d \tau, \quad t \in \R,
$$
and
$$
y_2 (\theta_t \omega_2)= -\delta \int^0_{-\infty} e^{\delta \tau}  ( \theta_t \omega_2 )
(\tau) d \tau, \quad t \in \R.
$$
Note that  there exists a $\theta_t$-invariant set $\tilde{\Omega}\subseteq \Omega$
of full $P$ measure  such that
  $y_j(\theta_t\omega_j)$ ($j=1, 2$) is
 continuous in $t$ for every $\omega \in \tilde{\Omega}$,
and
the random variable $|y_j(\omega_j)|$ is tempered
(see, e.g., \cite{bat1}).
 Therefore, it follows from Proposition 4.3.3 in \cite{arn1}
that there exists a tempered function $r(\omega)>0$ such that
\be
\label{z2}
\sum_{j=1}^2  \left ( |y_j(\omega_j)|^2 + |y_j(\omega_j)|^p \right )
\le r(\omega).
\ee
where $r(\omega)$ satisfies, for $P$-a.e. $\omega \in \Omega$,
\be
\label{z3}
r(\theta_t \omega ) \le e^{{\frac \eta{2}} |t|} r(\omega), \quad t\in \R,
\ee
where
  $\eta =\min \{\lambda, \delta \}$.
Then it follows from \eqref{z2}-\eqref{z3} that, for $P$-a.e. $\omega \in \Omega$,
\be
\label{zz}
\sum_{j=1}^2  \left ( |y_j(\theta_t \omega_j)|^2 + |y_j(\theta_t \omega_j)|^p \right )
\le e^{{\frac \eta{2}} |t|} r(\omega), \quad t\in \R.
\ee
Putting $z_j (\theta_t \omega) =   \phi_j  y_j(\theta_t \omega_j ) $,  by \eqref{y1}
and \eqref{y2}
we have
$$ dz_1 + \lambda z_1 dt =  \phi_1 d w_1 ,$$
and
$$ dz_2 + \delta z_2  dt =  \phi_2 d w_2 .$$
Let $\ut = u(t) - z_1(\theta_t \omega)$
and $\vt = v(t) - z_2(\theta_t \omega)$,
where $(u, v)$  satisfies \eqref{rd1}-\eqref{rd2}.
Then for  $(\ut, \vt)$, we have
\be
\label{u1}
{\frac {d\ut}{dt}} + \lambda \ut -\Delta \ut
+ \alpha \vt  = f(x, \ut  + z_1 (\theta_t \omega) ) + g  + \Delta z_1 (\theta_t \omega )
-\alpha z_2 (\theta_t \omega ),
\ee
and
\be
\label{v1}
{\frac {d\vt}{dt}} + \delta \vt
-\beta \ut   = h   + \beta z_1 (\theta_t \omega ).
\ee
By a Galerkin method as in \cite{mar1}, it  can be proved  that if $f$ satisfies \eqref{f1}-\eqref{f4}, then
for $P$-a.e. $\omega \in \Omega $ and for all $(\ut_0, \vt_0)  \in L^2(\R^n) \times \h$,
  system \eqref{u1}-\eqref{v1}
has   a unique solution
$(  \ut (\cdot, \omega, \ut_0), \vt (\cdot, \omega, \vt_0)
) \in C([0, \infty), \h \times \h)  $
  with $\ut(0, \omega, \ut_0) = \ut_0$  and
  $\vt(0, \omega, \vt_0) = \vt_0$.
  Further,  the solution $(  \ut (t, \omega, \ut_0), \vt (t, \omega, \vt_0) )$
  is continuous    with respect to $(\ut_0, \vt_0 ) $ in $L^2(\R^n) \times \h$
for all $t \ge 0$.  Throughout this paper, we always write
\be
\label{uv}
 u(t, \omega, u_0)=
\ut (t, \omega,  u_0 - z_1(\omega)) + z_1(\theta_t \omega), \
 \quad v(t, \omega, v_0)= \vt (t, \omega,  v_0 -
z_2(\omega)) + z_2(\theta_t \omega). \ee Then $(u, v)$ is  a
solution of problem \eqref{rd1}-\eqref{rd2} in some sense. We now
define  a mapping  $\Phi: \R^+ \times \Omega \times ( L^2(\R^n)
\times \h ) \to L^2(\R^n) \times \h$ by \be \label{phi}
 \Phi (t, \omega, (u_0, v_0) ) =  ( u (t, \omega, u_0), v(t, \omega, v_0) )  ,
 \quad \forall \ (t, \omega, (u_0, v_0)) \in
\R^+ \times \Omega \times ( L^2(\R^n)  \times \h ). \ee
Note that
$\Phi$   satisfies conditions (i), (ii) and (iii) in Definition
\ref{RDS}. Therefore, $\Phi$ is a continuous  random dynamical
system
 associated  with  the stochastic FitzHugh-Nagumo
 system  on $\R^n$.  In the next two sections, we establish   uniform estimates
  for $\Phi$
and prove the existence
  of a random attractor.

\section{Uniform  estimates of solutions  }
\setcounter{equation}{0}

In this section, we
 derive uniform estimates on the  solutions of the  stochastic
FitzHugh-Nagumo system  defined on $\R^n$
when $t \to \infty$.   These estimates  are necessary
 for proving  the existence of bounded absorbing sets
 and the asymptotic compactness of
  the random dynamical system associated  with the system.
  Particularly, we
  will  show that   the tails of the solutions
 for large space variables are uniformly small
 when time is sufficiently  large.

 From now on, we always assume that $\mathcal{D}$ is
 the collection of all tempered subsets of  $L^2(\R^n) \times \h$
with respect to  $(\Omega, \mathcal{F}, P, (\theta_t)_{t \in
\R})$. The next lemma shows that $\Phi$ has an absorbing  set in
$\mathcal{D}$.

\begin{lem}
\label{lem41} Assume that $g, h  \in L^2(\R^n)$ and
\eqref{f1}-\eqref{f4} hold.  Then there exists
$\{K(\omega)\}_{\omega \in \Omega}\in \mathcal{D}$ such that
$\{K(\omega)\}_{\omega\in \Omega} $ is a closed  absorbing  set for
$\Phi$ in $\mathcal{D}$, that is, for any $B=
\{B(\omega)\}_{\omega \in \Omega} \in \mathcal{D}$ and $P$-a.e.
$\omega \in \Omega$, there is $T_B(\omega)>0$ such that
$$
\Phi(t, \theta_{-t} \omega, B(\theta_{-t} \omega) ) \subseteq
K(\omega) \quad \mbox{for all}  \   t \ge T_B(\omega).
$$
\end{lem}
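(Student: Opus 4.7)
My plan is a standard two-step energy argument, adapted to the coupled system and to the fact that the natural phase space is $L^2(\R^n)\times H^1(\R^n)$. I work with the transformed system \eqref{u1}--\eqref{v1} for $(\ut,\vt)$, since the right-hand side is pathwise regular in $\omega$; estimates for $(u,v)$ will follow from \eqref{uv} and the temperedness of $z_1,z_2$.

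\textbf{Step 1 (energy in $L^2\times L^2$).} I would test \eqref{u1} with $\ut$ and \eqref{v1} with $(\alpha/\beta)\vt$ and add, so that the coupling $\alpha(\vt,\ut)$ cancels. The nonlinear term is rewritten as
$\int f(x,\ut+\zto)\ut\,dx = \int f(x,\ut+\zto)(\ut+\zto)\,dx - \int f(x,\ut+\zto)\,\zto\,dx$,
where the first piece is handled by \eqref{f1} to produce a $-\alpha_1\|\ut+\zto\|_p^p$ dissipation (plus $\|\psi_1\|_1$), and the second piece is controlled by \eqref{f2} together with Young's inequality, absorbing a fraction of the $\|\ut+\zto\|_p^p$ term and paying with powers of $|y_1(\theta_t\omega_1)|^p$ and integrals of $|\phi_1|^p,|\psi_2||\phi_1|$. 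The forcing terms $(g,\ut)$, $(\Delta z_1,\ut)$, $\alpha(z_2,\ut)$, $(h,\vt)$, $\alpha(z_1,\vt)$ are all handled by Cauchy--Schwarz and Young with a small fraction of $\lambda\|\ut\|^2+(\alpha\delta/\beta)\|\vt\|^2$. This gives, with some $\sigma_1>0$,
\be
\tfrac{d}{dt}\bigl(\|\ut\|^2+\tfrac{\alpha}{\beta}\|\vt\|^2\bigr)+\sigma_1\bigl(\|\ut\|^2+\|\nabla\ut\|^2+\|\vt\|^2\bigr)\le C\bigl(1+R_1(\theta_t\omega)\bigr),
\ee
where $R_1(\theta_t\omega)$ is a polynomial in $|y_j(\theta_t\omega_j)|$ of degrees at most $p$.

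\textbf{Step 2 (gradient of $\vt$).} Applying $\nabla$ to \eqref{v1} and testing with $\nabla\vt$ gives
$\tfrac12\tfrac{d}{dt}\|\nabla\vt\|^2+\delta\|\nabla\vt\|^2-\beta(\nabla\ut,\nabla\vt)=(\nabla h,\nabla\vt)+\beta(\nabla z_2,\nabla\vt)$
(note the coupling is with $\nabla\ut$, not with $\nabla u$, so no $\nabla z_1$ is needed here; and $\nabla z_2\in L^2$ because $\phi_2\in H^1$). I multiply this identity by a small constant $\kappa>0$ and add to the bound from Step~1; the cross term $\kappa\beta|(\nabla\ut,\nabla\vt)|$ is absorbed into $\sigma_1\|\nabla\ut\|^2$ by Young's inequality, at the price of enlarging the coefficient of $\|\nabla\vt\|^2$ on the left, which is still dominated by $\kappa\delta\|\nabla\vt\|^2$ for $\kappa$ small enough. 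The result is an energy inequality
\be
\tfrac{d}{dt}E(t)+\sigma\,E(t)\le C\bigl(1+R(\theta_t\omega)\bigr),\qquad E(t):=\|\ut(t)\|^2+\tfrac{\alpha}{\beta}\|\vt(t)\|^2+\kappa\|\nabla\vt(t)\|^2,
\ee
with $\sigma>0$ and $R$ a polynomial in $|y_j|$ of degree at most $p$ (so $R(\theta_t\omega)\le c\,e^{(\eta/2)|t|}r(\omega)^{p/2}$ by \eqref{zz} after enlarging $r$; in particular it is tempered).

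\textbf{Step 3 (Gronwall, pullback, and definition of $K$).} Gronwall yields
$E(t)\le e^{-\sigma t}E(0)+C\int_0^t e^{-\sigma(t-s)}\bigl(1+R(\theta_s\omega)\bigr)\,ds.$
Replacing $\omega$ by $\theta_{-t}\omega$, changing variables $s\mapsto s-t$, and using \eqref{zz}, the forcing integral is majorized by $\int_{-\infty}^0 e^{\sigma s}\bigl(1+R(\theta_s\omega)\bigr)\,ds$, which is finite for $P$-a.e.\ $\omega$ provided $\sigma>\eta/2$ (ensured by choosing $\sigma$ small). For the initial-data term, I use $(\ut_0,\vt_0)=(u_0-z_1(\theta_{-t}\omega),v_0-z_2(\theta_{-t}\omega))$ with $(u_0,v_0)\in B(\theta_{-t}\omega)$ and $B\in\mathcal D$; temperedness of $B$ and of $z_j$ makes $e^{-\sigma t}E(0)\to 0$. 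Adding back $z_1(\omega),z_2(\omega)$ via \eqref{uv} (their norms are tempered in $\omega$), I obtain a deterministic-in-$t$ bound $\|\Phi(t,\theta_{-t}\omega,(u_0,v_0))\|_{L^2\times H^1}^2\le \rho(\omega)^2$ for all $t\ge T_B(\omega)$, and I take $K(\omega)$ to be the closed ball of radius $\rho(\omega)$ in $L^2(\R^n)\times H^1(\R^n)$. Tempered-ness of $\rho$ follows from the tempered bound \eqref{z3} on $r$.

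\textbf{Main obstacle.} The delicate point is the interaction of the nonlinear term with the pathwise shift: extracting dissipation from \eqref{f1} requires working with $\ut+\zto$ rather than $\ut$, so one must carefully absorb the leftover $|\zto|^p$-type terms using \eqref{f2} and Young's inequality, and check that the resulting random coefficient $R(\theta_t\omega)$ still grows subexponentially in $t$ (governed by \eqref{zz}) so that the pullback integral converges. Once this balance is set, combining the $L^2$ estimate with the $H^1$ estimate for $\vt$ is a routine coupling argument.
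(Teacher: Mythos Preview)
You have misidentified the phase space: in this paper $\mathcal D$ consists of tempered subsets of $L^2(\R^n)\times L^2(\R^n)$, not $L^2\times H^1$, and the absorbing set in Lemma~\ref{lem41} is a ball in $L^2\times L^2$. This matters because your Step~2 cannot be carried out for data coming from an arbitrary $B\in\mathcal D$. Equation~\eqref{v1} is an ODE in $x$ with no smoothing; if $v_0\in L^2$ only, then $\vt_0=v_0-z_2(\omega)\in L^2$ only, and $\|\nabla\vt_0\|$ is in general infinite. Your functional $E(0)=\|\ut_0\|^2+\tfrac{\alpha}{\beta}\|\vt_0\|^2+\kappa\|\nabla\vt_0\|^2$ is then infinite, and the Gronwall/pullback step $e^{-\sigma t}E(0)\to 0$ fails. (The paper emphasizes exactly this point later: $\vt$ never gains regularity, and the $H^1$ control needed for compactness is obtained only for the piece $\vt_2$ of $\vt$ that starts from zero initial data; see the decomposition \eqref{v1_1}--\eqref{v2_1} and Lemma~\ref{lemv2}.) A secondary slip in Step~2: the forcing in \eqref{v1} is $\beta z_1(\theta_t\omega)$, not $z_2$, so the term on the right should be $\beta(\nabla z_1,\nabla\vt)$, and $\nabla z_1$ certainly is needed.

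The fix is simply to delete Step~2. Your Step~1 (test \eqref{u1} with $\beta\ut$, \eqref{v1} with $\alpha\vt$, cancel the coupling, treat $f$ via \eqref{f1}--\eqref{f2} after writing $\ut=(\ut+\zto)-\zto$) together with your Step~3 (Gronwall on $\beta\|\ut\|^2+\alpha\|\vt\|^2$, replace $\omega\mapsto\theta_{-t}\omega$, use \eqref{zz} to bound the pullback integral, add back $z_1(\omega),z_2(\omega)$ via \eqref{uv}) is exactly the paper's proof and yields a tempered closed ball $K(\omega)\subset L^2\times L^2$ absorbing every $B\in\mathcal D$.
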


\begin{proof}
Taking the inner product of \eqref{u1} with $\beta \ut$ in $L^2(\R^n)$ we find that
$$
{\frac 12} \beta {\frac d{dt}} \|\ut \|^2 + \lambda \beta  \| \ut \|^2 + \beta  \| \nabla \ut  \|^2
+ \alpha \beta (\vt, \ut)
$$
\be
\label{p41_1}
=
 \beta ( f(x, \ut + z_1 (\theta_t \omega ) ) , \  \ut ) + \beta (g, \ut)
+ \beta (  \Delta z_1 (\theta_t \omega ),  \ut)
-\alpha \beta (z_2 (\theta_t \omega), \ut).
 \ee
 Taking the inner product of \eqref{v1} with $\alpha  \vt$ in $L^2(\R^n)$ we find that
\be
\label{p41_1_a1}
{\frac 12} \alpha  {\frac d{dt}} \|\vt \|^2 + \alpha \delta   \| \vt \|^2
-  \alpha \beta (\vt, \ut)
= \alpha (h, \vt) + \alpha \beta (
  z_1 (\theta_t \omega ),  \vt) .
  \ee
  Adding \eqref{p41_1} and \eqref{p41_1_a1}, we obtain that
  $$
  {\frac 12} {\frac d{dt}} \left (
   \beta \| \ut \|^2 + \alpha \| \vt \|^2
  \right )
  + \lambda \beta \| \ut \|^2
  + \alpha \delta \| \vt \|^2
  + \beta \| \nabla \ut \|^2
  $$
$$
=
 \beta ( f(x, \ut + z_1 (\theta_t \omega ) ) , \  \ut ) + \beta (g, \ut)
+ \beta (  \Delta z_1 (\theta_t \omega ),  \ut)
$$
  \be
\label{p41_1_a2}
-\alpha \beta (z_2 (\theta_t \omega), \ut) +
\alpha (h, \vt) + \alpha \beta (
  z_1 (\theta_t \omega ),  \vt).
 \ee
  We now estimate  every term on the right-hand side of
  \eqref{p41_1_a2}.
  For the nonlinear term, by \eqref{f1}-\eqref{f2} we obtain that
$$
( f(x, \ut + z_1 (\theta_t \omega ) ) , \   \ut )
= ( f(x, \ut+ z_1(\theta_t \omega ) ) , \  \ut + z_1 (\theta_t \omega )  )
- ( f(x, \ut + z_1(\theta_t \omega ) ) , \  z_1 (\theta_t \omega ) )
$$
$$
\le -\alpha_1 \ii | u |^p dx + \ii \psi_1(x) dx
- \ii f(x, u)\;  \zto dx
$$
$$
\le -\alpha_1 \ii | u |^p dx + \ii \psi_1(x) dx
+ \alpha_2  \ii |u|^{p-1}| \; | \zto | dx + \ii |\psi_2|\;  |\zto| dx
$$
$$
\le -\alpha_1   \| u \|_p ^p  +  \| \psi_1\|_1
+ {\frac 12} \alpha_1    \|u \|_p^{p}
+ c_1  \|\zto \|^p _p
+ {\frac 12} \| \psi_2 \|^2 + {\frac 12} \| \zto \|^2
$$
\be
\label{p41_2}
\le - {\frac 12} \alpha_1 \| u \|^p_p
+ c_2 ( \|\zto \|^p _p  +  \|\zto \|^2 ) +c_3.
\ee
The second term on the right-hand side of \eqref{p41_1_a2}
is bounded by
\be
\label{p41_3}
 \beta |  (g,  \ut ) | \le
\beta \| g \| \| \ut \|
\le
{\frac 14} \lambda \beta  \| \ut \|^2 + {\frac \beta{ \lambda  }} \| g \|^2.
\ee
For the third term on the right-hand side of  \eqref{p41_1_a2}, we have
\be
\label{p41_3_a1}
\beta |(\nabla \zto, \nabla \ut ) |
\le
{\frac 12} \beta \| \nabla \zto \|^2 + {\frac 12} \beta \| \nabla \ut \|^2
\ee
Similarly, the remaining terms on the right-hand side of \eqref{p41_1_a2}
is bounded  by
$$
 \alpha \beta  | (z_2 (\theta_t \omega), \ut) | +
\alpha  | (h, \vt)|  + \alpha \beta |(
  z_1 (\theta_t \omega ),  \vt) |
$$
$$
\le {\frac 14} \lambda \beta \| \ut \|^2
+ {\frac 1\lambda}\alpha^2 \beta \| \ztotwo \|^2
+ {\frac 14} \alpha \delta \| \vt \|^2
+ {\frac \alpha\delta} \| h \|^2
+   {\frac 14} \alpha \delta \| \vt \|^2
+ {\frac {\alpha \beta^2}\delta} \| \zto \|^2.
$$
\be
\label{p41_3_a2}
\le {\frac 14} \lambda \beta \| \ut \|^2
+ {\frac 1\lambda}\alpha^2 \beta \| \ztotwo \|^2
+ {\frac 12} \alpha \delta \| \vt \|^2
+ {\frac \alpha\delta} \| h \|^2
+ {\frac {\alpha \beta^2}\delta} \| \zto \|^2.
\ee
Then it follows from \eqref{p41_1_a2}-\eqref{p41_3_a2} that
$$
  {\frac d{dt}} \left (
   \beta \| \ut \|^2 + \alpha \| \vt \|^2
  \right )
  + \lambda \beta \| \ut \|^2
  + \alpha \delta \| \vt \|^2
  + \beta \| \nabla \ut \|^2
  +
  \alpha_1 \| u \|^p_p
$$
\be
\label{p41_4}
\le
 c_4  \left ( \| \zto \|^p _p +  \| \zto \|^2 + \| \nabla \zto \|^2
+ \| \ztotwo \|^2 \right )
+ c_5.
\ee
Note that $\zto =  \phi_1   y_1 (\theta_t \omega_1) $, $\ztotwo =
\phi_2   y_2 (\theta_t \omega_2) $, $  \phi_2 \in H^1(\R^n)$  and
  $\phi_1  \in H^2(\R^n) \cap W^{2,p}(\R^n)$.
Therefore,  the right-hand  of \eqref{p41_4} is bounded by
\be
\label{p41_5}
c_6 \sum_{j=1}^2( | y_j (\theta_t \omega_j)  |^p + | y_j (\theta_t \omega_j)  |^2 ) + c_7
=p_1(\theta_t \omega) +c_7.
\ee
 By  \eqref{zz}, we find that for $P$-a.e. $\omega \in \Omega$,
 \be
 \label{p41_6a1}
 p_1(\theta_\tau \omega)
\le  c_6  e^{{\frac 12} \eta |\tau|} r(\omega) , \quad \forall \ \tau \in \R.
\ee
It follows from \eqref{p41_4}-\eqref{p41_5} that,  for all $t \ge 0$,
\be
\label{p41_6}
  {\frac d{dt}} \left (
   \beta \| \ut \|^2 + \alpha \| \vt \|^2
  \right )
  + \lambda \beta \| \ut \|^2
  + \alpha \delta \| \vt \|^2
  + \beta \| \nabla \ut \|^2
  +
  \alpha_1 \| u \|^p_p
 \le p_1(\theta_t \omega)  + c_7,
\ee
which implies that, for all $t \ge 0$,
$$
    {\frac d{dt}} \left (
   \beta \| \ut \|^2 + \alpha \| \vt \|^2
  \right )
  + \lambda \beta \| \ut \|^2
  + \alpha \delta \| \vt \|^2
 \le p_1(\theta_t \omega)  + c_7.
$$
Note that  $\eta =\min \{\lambda, \delta \}$.  We get that, for all $t \ge  0$,
\be
\label{p41_7}
 {\frac d{dt}} \left (
   \beta \| \ut \|^2 + \alpha \| \vt \|^2
  \right )
  + \eta \left (  \beta \| \ut \|^2
  + \alpha   \| \vt \|^2 \right )
 \le p_1(\theta_t \omega)  + c_7.
 \ee
By  Gronwall's lemma, we find that, for all $t \ge 0$,
$$
 \beta \| \ut (t, \omega, \ut_0(\omega) ) \|^2 +
 \alpha \| \vt (t, \omega, \vt_0(\omega) ) \|^2
 $$
 \be
\label{p41_8}
\le e^{-\eta  t} \left  ( \beta  \| \ut_0 (\omega) \|^2 +  \alpha  \| \vt_0 (\omega) \|^2 \right )
+ \int_0^t e^{\eta(\tau -t)}  p_1(\theta_\tau \omega ) d\tau
+ {\frac {c_7}\eta}.
\ee
By replacing $\omega$ by $\theta_{-t} \omega$, we get from
\eqref{p41_8} and \eqref{p41_6a1}  that, for all $t \ge 0$,
$$
 \beta \| \ut (t, \theta_{-t} \omega, \ut_0(\theta_{-t}\omega) ) \|^2 +
 \alpha \| \vt (t, \theta_{-t}\omega, \vt_0(\theta_{-t}\omega) ) \|^2
 $$
$$
\le e^{-\eta  t} \left  ( \beta  \| \ut_0 (\theta_{-t}\omega) \|^2
+  \alpha  \| \vt_0 (\theta_{-t}\omega) \|^2 \right )
+ \int_0^t e^{\eta(\tau -t)}  p_1(\theta_{\tau-t} \omega ) d\tau
+ {\frac {c_7}\eta}.
$$
$$
\le e^{-\eta  t} \left  ( \beta  \| \ut_0 (\theta_{-t}\omega) \|^2
+  \alpha  \| \vt_0 (\theta_{-t}\omega) \|^2 \right )
+ \int^0_{-t } e^{\eta\tau }  p_1(\theta_{\tau} \omega ) d\tau
+ {\frac {c_7}\eta}.
$$
$$
\le e^{-\eta  t} \left  ( \beta  \| \ut_0 (\theta_{-t}\omega) \|^2
+  \alpha  \| \vt_0 (\theta_{-t}\omega) \|^2 \right )
+ c_6  \int^0_{-t } e^{{\frac 12} \eta\tau }  r( \omega ) d\tau
+ {\frac {c_7}\eta}.
$$
  \be
\label{p41_9}
\le e^{-\eta  t} \left  ( \beta  \| \ut_0 (\theta_{-t}\omega) \|^2
+  \alpha  \| \vt_0 (\theta_{-t}\omega) \|^2 \right )
+ {\frac {2c_6}\eta} r(\omega)
+ {\frac {c_7}\eta}.
\ee
By \eqref{uv} and \eqref{phi} we have
$$
\Phi(t, \omega, (u_0, v_0)(\omega) ) =  \left ( \ut (t, \omega,
u_0(\omega) - z_1(\omega) ) + z_1 (\theta_t \omega ) , \quad
 \vt (t, \omega, v_0(\omega) - z_2(\omega) ) + z_2 (\theta_t \omega )
\right ).
$$
So by \eqref{p41_9} we get that, for all $t \ge 0$,
$$
\|  \Phi(t, \theta_{-t} \omega, (u_0, v_0) (\theta_{-t} \omega) )
\|^2
$$
$$
= \| \ut (t,  \theta_{-t} \omega, u_0(\theta_{-t} \omega) - z_1(\theta_{-t} \omega) )
+ z_1 (  \omega )  \|^2
$$
$$
+
\| \vt (t,  \theta_{-t} \omega, v_0(\theta_{-t} \omega) - z_2(\theta_{-t} \omega) )
+ z_2 (  \omega )  \|^2
$$
$$
\le  2  \| \ut (t,  \theta_{-t} \omega, u_0(\theta_{-t} \omega) - z_1(\theta_{-t} \omega) ) \|^2
+ 2  \| z_1 (  \omega )  \|^2
$$
$$
+
 2 \| \vt (t,  \theta_{-t} \omega, v_0(\theta_{-t} \omega) - z_2(\theta_{-t} \omega) )  \|^2
+  2  \| z_2 (  \omega )  \|^2
$$
$$
 \le 2 \left ( {\frac 1\alpha} + {\frac 1\beta} \right )
 e^{- \eta t} \left (
  \beta \|   u_0(\theta_{-t} \omega) - z_1(\theta_{-t} \omega) \|^2
  + \alpha \|   v_0(\theta_{-t} \omega) - z_2(\theta_{-t} \omega) \|^2
 \right )
 $$
 $$
 + 2  \| z_1 (  \omega )  \|^2 + 2  \| z_2 (  \omega )  \|^2
 + c_8 r(\omega) + c_9
$$
$$
 \le c_{10}
 e^{- \eta t} \left (
  \| u_0(\theta_{-t} \omega) \|^2
  +   \|   v_0(\theta_{-t} \omega) \|^2 + \|  z_1(\theta_{-t} \omega) \|^2
+ \|  z_2(\theta_{-t} \omega) \|^2
 \right )
 $$
\be
\label{p41_20}
 + 2  \| z_1 (  \omega )  \|^2 + 2  \| z_2 (  \omega )  \|^2
 + c_8 r(\omega) + c_9
 \ee
Note that   $\| z_1 (\omega)\|^2$,   $\| z_2 (\omega)\|^2$
and
$\{B(\omega)\}_{\omega \in \Omega} \in \mathcal{D}$  are tempered.
   Therefore,
for  $(u_0, v_0)  (\theta_{-t} \omega ) \in B(\theta_{-t} \omega )$, then there is
 $T_B(\omega)>0$ such that for all $t \ge T_B(\omega)$,
 $$
 c_{10}
 e^{- \eta t} \left (
  \| u_0(\theta_{-t} \omega) \|^2
  +   \|   v_0(\theta_{-t} \omega) \|^2 + \|  z_1(\theta_{-t} \omega) \|^2
+ \|  z_2(\theta_{-t} \omega) \|^2
 \right )
   \le    c_8  r(\omega)
+ c_9,
 $$
 which along with \eqref{p41_20} shows that, for all $t \ge T_B(\omega)$,
\be \label{p41_21} \|  \Phi(t, \theta_{-t} \omega, (u_0, v_0)
(\theta_{-t} \omega) ) \|^2 \le 2 \left (
    \| z_1 (\omega ) \|^2  +  \| z_2 (\omega ) \|^2  +
 c_8  r(\omega) +c_9
\right ).
\ee
Given $\omega \in \Omega$, denote by
$$
K(\omega) = \{ (u,v)  \in \h \times \h : \ \| u \|^2  + \| v \|^2  \le
2 \left (
    \| z_1 (\omega ) \|^2  +  \| z_2 (\omega ) \|^2  +
 c_8  r(\omega) +c_9
\right )  \}.
$$
Then $\{K(\omega)\}_{\omega \in \Omega} \in \mathcal{D}$. Further,
\eqref{p41_21} shows  that $\{K(\omega)\}_{\omega \in \Omega}$ is
an absorbing set  for $\Phi$ in $\mathcal{D}$, which completes the
proof.
\end{proof}

\begin{lem}
\label{lem41a}
Assume that $g, h \in L^2(\R^n)$ and \eqref{f1}-\eqref{f4}
hold.   Let   $B=\{B(\omega)\}_{\omega \in \Omega}\in \mathcal{D}$.
Then for every $T_1\ge 0$ and  $P$-a.e. $\omega \in \Omega$,
   $u(t, \omega, u_0(\omega))$
  and
$\ut(t, \omega, \ut_0(\omega))$   satisfy,  for all $t \ge T_1$,
\be
\label{lem41a_1}
\int_{T_1}^{t }
 e^{\eta (s-t)}
\|  u (s, \theta_{-t} \omega, u_0(\theta_{-t} \omega)  )\|^p_p ds
\le  c  e^{-\eta t}
\left (
 \| \ut_0(\theta_{-t} \omega) \|^2 +
  \| \vt_0(\theta_{-t} \omega) \|^2 \right )
 + c(1 + r(\omega) ),
\ee
\be
\label{lem41a_2}
\int_{T_1}^{t }
 e^{\eta (s-t)}
\| \nabla \ut(s, \theta_{-t} \omega, \ut_0(\theta_{-t} \omega)  )\|^2 ds
\le
 c  e^{-\eta t}
\left (
 \| \ut_0(\theta_{-t} \omega) \|^2 +
  \| \vt_0(\theta_{-t} \omega) \|^2 \right )
 + c(1 + r(\omega) ),
\ee
where $c$ is a positive deterministic constant independent of $T_1$,
 and
  $r(\omega)$ is the  tempered function   in \eqref{z2}.
\end{lem}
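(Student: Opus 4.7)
The proof is essentially a refinement of the Gronwall argument already employed in the proof of Lemma~\ref{lem41}, exploiting the fact that the energy inequality \eqref{p41_6} already retains the dissipation terms $\alpha_1\|u\|^p_p$ and $\beta\|\nabla\ut\|^2$ on its left-hand side. Both estimates can therefore be extracted simultaneously from a single integration, with no further testing of the equations required.

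First I would rewrite \eqref{p41_6} by multiplying through by the integrating factor $e^{\eta s}$; since $\eta=\min\{\lambda,\delta\}$, the dissipation terms $\lambda\beta\|\ut\|^2$ and $\alpha\delta\|\vt\|^2$ pointwise dominate $\eta(\beta\|\ut\|^2+\alpha\|\vt\|^2)$, which yields
\[
\frac{d}{ds}\!\left[e^{\eta s}\bigl(\beta\|\ut(s)\|^2+\alpha\|\vt(s)\|^2\bigr)\right]+e^{\eta s}\bigl(\beta\|\nabla\ut(s)\|^2+\alpha_1\|u(s)\|^p_p\bigr)\le e^{\eta s}\bigl(p_1(\theta_s\omega)+c_7\bigr).
\]
Integrating from $0$ to $t$ (not from $T_1$, so that the resulting bound is genuinely independent of $T_1$), discarding the non-negative term $e^{\eta t}(\beta\|\ut(t)\|^2+\alpha\|\vt(t)\|^2)$, and multiplying by $e^{-\eta t}$ gives
\[
\int_0^t e^{\eta(s-t)}\bigl(\beta\|\nabla\ut(s)\|^2+\alpha_1\|u(s)\|^p_p\bigr)\,ds\le e^{-\eta t}\bigl(\beta\|\ut_0\|^2+\alpha\|\vt_0\|^2\bigr)+\int_0^t e^{\eta(s-t)}\bigl(p_1(\theta_s\omega)+c_7\bigr)\,ds.
\]
Since both integrands on the left are non-negative, the integrals over $[T_1,t]$ are bounded by the corresponding integrals over $[0,t]$.

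Next I would specialize $\omega$ to $\theta_{-t}\omega$ and change variables $\tau=s-t$ in the driving term:
\[
\int_0^t e^{\eta(s-t)}p_1(\theta_{s-t}\omega)\,ds=\int_{-t}^0 e^{\eta\tau}p_1(\theta_\tau\omega)\,d\tau\le c_6\,r(\omega)\int_{-t}^0 e^{\eta\tau/2}\,d\tau\le\frac{2c_6}{\eta}r(\omega),
\]
where the middle inequality is precisely the temperedness estimate \eqref{p41_6a1}. The constant contribution is bounded by $c_7/\eta$. Separating the two left-hand integrals and absorbing all prefactors into a single deterministic $c$ that depends only on $\alpha,\beta,\alpha_1,\eta,c_6,c_7$ produces \eqref{lem41a_1} and \eqref{lem41a_2}.

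The argument carries no real obstacle; the only delicate point is the conscious decision to integrate from $0$ rather than from $T_1$, which is exactly what makes the constant $c$ independent of $T_1$ as the statement requires. Since the temperedness bound \eqref{p41_6a1} enters in the same manner as in the proof of Lemma~\ref{lem41}, all the machinery is already in place.
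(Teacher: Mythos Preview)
Your argument is correct and follows the same underlying route as the paper---multiply the energy inequality \eqref{p41_6} by the integrating factor $e^{\eta s}$, integrate, and control the forcing term via \eqref{p41_6a1}. The one genuine difference is your choice of integration interval: the paper integrates over $(T_1,t)$, which leaves a boundary term $e^{\eta(T_1-t)}\bigl(\beta\|\ut(T_1)\|^2+\alpha\|\vt(T_1)\|^2\bigr)$ that must then be estimated separately by going back to \eqref{p41_8} (this is the content of \eqref{p41a_1}--\eqref{p41a_3}). You instead integrate over $(0,t)$ and pass to $[T_1,t]$ by monotonicity, so the boundary term is already the initial datum and no second invocation of \eqref{p41_8} is needed. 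Your version is therefore a mild streamlining of the paper's proof; the paper's extra step buys nothing here, though the pattern of estimating the state at an intermediate time $T_1$ via \eqref{p41_8} does recur later (e.g.\ in the tail estimate of Lemma~\ref{lem44}, equation \eqref{p44_15}), which may explain why the author set it up this way.
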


\begin{proof}
First replacing $t$ by $T_1$ and  then replacing $\omega$ by $\theta_{-t} \omega$
in \eqref{p41_8}, we find that
$$
 \beta \| \ut (T_1, \theta_{-t}\omega, \ut_0(\theta_{-t}\omega) ) \|^2 +
 \alpha \| \vt (T_1, \theta_{-t}\omega, \vt_0(\theta_{-t}\omega) ) \|^2
 $$
$$
\le e^{-\eta  T_1 } \left  ( \beta  \| \ut_0 (\theta_{-t}\omega) \|^2
+  \alpha  \| \vt_0 (\theta_{-t}\omega) \|^2 \right )
+ \int_0^{T_1} e^{\eta(\tau -T_1)}  p_1(\theta_{\tau-t} \omega ) d\tau
+ c
$$
Multiply the above by $e^{\eta (T_1-t)}$ and then simplify to get that
$$e^{\eta (T_1-t)} \left (
 \beta \| \ut (T_1, \theta_{-t}\omega, \ut_0(\theta_{-t}\omega) ) \|^2 +
 \alpha \| \vt (T_1, \theta_{-t}\omega, \vt_0(\theta_{-t}\omega) ) \|^2 \right )
 $$
 \be
\label{p41a_1}
\le e^{-\eta  t } \left  ( \beta  \| \ut_0 (\theta_{-t}\omega) \|^2
+  \alpha  \| \vt_0 (\theta_{-t}\omega) \|^2 \right )
+ \int_0^{T_1} e^{\eta(\tau -t)}  p_1(\theta_{\tau-t} \omega ) d\tau
+ ce^{\eta (T_1-t)}.
 \ee
By \eqref{p41_6a1}, the second term on the right-hand side of
\eqref{p41a_1} satisfies
$$
\int_0^{T_1} e^{\eta(\tau -t)}  p_1(\theta_{\tau-t} \omega ) d\tau
=\int_{-t}^{T_1 -t} e^{\eta \tau} p_1(\theta_\tau \omega )  d \tau
$$
\be
\label{p41a_2}
\le c_6 r(\omega) \int_{-t}^{T_1 -t} e^{ {\frac 12} \eta \tau}  d \tau
\le {\frac 2\eta} c_6 r(\omega) e^{ {\frac 12} \eta (T_1 -t)}.
\ee
From \eqref{p41a_1}-\eqref{p41a_2} it follows  that
$$e^{\eta (T_1-t)} \left (
 \beta \| \ut (T_1, \theta_{-t}\omega, \ut_0(\theta_{-t}\omega) ) \|^2 +
 \alpha \| \vt (T_1, \theta_{-t}\omega, \vt_0(\theta_{-t}\omega) ) \|^2 \right )
 $$
\be
\label{p41a_3}
\le e^{-\eta  t } \left  ( \beta  \| \ut_0 (\theta_{-t}\omega) \|^2
+  \alpha  \| \vt_0 (\theta_{-t}\omega) \|^2 \right )
+  {\frac 2\eta} c_6 r(\omega) e^{ {\frac 12} \eta (T_1 -t)}
+ ce^{\eta (T_1-t)}.
 \ee
 Note that \eqref{p41_6} implies, for all $t \ge 0$,
$$
  {\frac d{dt}} \left (
   \beta \| \ut \|^2 + \alpha \| \vt \|^2
  \right )
  + \eta \left (  \beta \| \ut \|^2
  + \alpha  \| \vt \|^2 \right )
  + \beta \| \nabla \ut \|^2
  +
  \alpha_1 \| u \|^p_p
 \le p_1(\theta_t \omega)  + c .
$$
Multiplying the above by $e^{\eta t}$ and then integrating
over $(T_1, t)$, we get that, for all $ t\ge T_1$,
$$
\beta \| \ut (t, \omega, \ut_0(\omega)  )\|^2
+
\alpha \| \vt (t, \omega, \vt_0(\omega)  )\|^2
$$
$$
+ \int_{T_1}^t e^{\eta (s-t)} \| \nabla \ut(s, \omega, \ut_0(\omega) ) \|^2 ds
+
\alpha_1 \int_{T_1}^{t }
 e^{\eta (s-t)}
\|  u (s,  \omega, u_0( \omega)  )\|^p_p ds
$$
$$
\le e^{\eta (T_1-t)}  \left (
 \beta \| \ut (T_1 , \omega, \ut_0(\omega)  )\|^2
+
\alpha \| \vt (T_1, \omega, \vt_0(\omega)  )\|^2
\right )
$$
\be
\label{p41a_4}
+  \int_{T_1}^t e^{\eta (s-t)} p_1 (\theta_s \omega) ds
+c  \int_{T_1}^t e^{\eta (s-t)} ds.
\ee
Dropping the first  two   terms
on the left-hand side of \eqref{p41a_4} and
 replacing $\omega$ by $\theta_{-t} \omega$, we obtain   that, for all
$t \ge T_1$,
$$
  \int_{T_1}^t e^{\eta (s-t)}
\| \nabla \ut(s, \theta_{-t}\omega, \ut_0(\theta_{-t}\omega) ) \|^2 ds
+
\alpha_1 \int_{T_1}^{t }
 e^{\eta (s-t)}
\|  u (s,  \theta_{-t}\omega, u_0(\theta_{-t} \omega)  )\|^p_p ds
$$
$$
\le e^{\eta (T_1-t)}  \left (
 \beta \| \ut (T_1 , \theta_{-t}\omega, \ut_0(\theta_{-t}\omega)  )\|^2
+
\alpha \| \vt (T_1, \theta_{-t}\omega, \vt_0(\theta_{-t}\omega)  )\|^2
\right )
$$
$$
+  \int_{T_1}^t e^{\eta (s-t)} p_1 (\theta_{s-t} \omega) ds
+c  \int_{T_1}^t e^{\eta (s-t)} ds.
$$
$$
\le e^{\eta (T_1-t)}  \left (
 \beta \| \ut (T_1 , \theta_{-t}\omega, \ut_0(\theta_{-t}\omega)  )\|^2
+
\alpha \| \vt (T_1, \theta_{-t}\omega, \vt_0(\theta_{-t}\omega)  )\|^2
\right )
$$
\be
\label{p41a_5}
+  \int_{T_1-t}^0 e^{\eta \tau} p_1 (\theta_{\tau} \omega) d\tau
+ {\frac c\eta}
\ee
By \eqref{p41_6a1}, the second term on the right-hand
side of \eqref{p41a_5}
satisfies, for $t \ge T_1$,
\be
\label{p41a_6}
\int_{T_1-t}^0 e^{\eta \tau } p_1 (\theta_{\tau} \omega) d \tau
\le c_6 r(\omega)
\int_{T_1-t}^0 e^{{\frac 12} \eta \tau } d \tau
\le {\frac 2\eta} c_6 r(\omega).
\ee
Then, using \eqref{p41a_3} and \eqref{p41a_6}, it follows from
\eqref{p41a_5} that
$$
  \int_{T_1}^t e^{\eta (s-t)}
\| \nabla \ut(s, \theta_{-t}\omega, \ut_0(\theta_{-t}\omega) ) \|^2 ds
+
\alpha_1 \int_{T_1}^{t }
 e^{\eta (s-t)}
\|  u (s,  \theta_{-t}\omega, u_0(\theta_{-t} \omega)  )\|^p_p ds
$$
$$
 \le e^{-\eta  t} \left (
\beta \| \ut _0 (\theta_{-t} \omega ) \|^2
+ \alpha \| \vt _0 (\theta_{-t} \omega ) \|^2 \right )
 + c(1 + r(\omega) ).
 $$
 This completes the proof.
\end{proof}

As a special case of Lemma \ref{lem41a},  we have the following
uniform estimates.

\begin{lem}
\label{lem42}
Assume that $g, h \in L^2(\R^n)$ and \eqref{f1}-\eqref{f4}
hold.   Let   $B= \{B(\omega)\}_{\omega \in \Omega}\in \mathcal{D}$.
Then for $P$-a.e. $\omega \in \Omega$, there exists    $T_B(\omega)>0$
such that  for all $t \ge T_B(\omega)$,
$$
\int_t^{t+1} \|  u(s, \theta_{-t-1} \omega,
u_0(\theta_{-t-1 } \omega)  )\|^p_p ds
\le c (1+ r  (\omega) ),
$$
$$
\int_t^{t+1} \| \nabla \ut (s, \theta_{-t-1} \omega,
\ut_0(\theta_{-t-1 } \omega)  )\|^2 ds
\le c (1+ r  (\omega) ),
$$
where $c$ is a positive deterministic constant and
  $r(\omega)$ is the  tempered function   in \eqref{z2}.
\end{lem}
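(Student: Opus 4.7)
The plan is to apply Lemma \ref{lem41a} with the substitution $t \mapsto t+1$ and $T_1 := t$, so that $\theta_{-t}\omega$ is replaced by $\theta_{-(t+1)}\omega$ throughout. From \eqref{lem41a_1} this gives
\be
\label{plan_1}
\int_t^{t+1} e^{\eta(s-t-1)} \| u(s, \theta_{-t-1}\omega, u_0(\theta_{-t-1}\omega))\|_p^p\, ds
\le c\, e^{-\eta(t+1)}\bigl(\|\ut_0(\theta_{-t-1}\omega)\|^2 + \|\vt_0(\theta_{-t-1}\omega)\|^2\bigr) + c(1+r(\omega)),
\ee
and analogously for the gradient term from \eqref{lem41a_2}. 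Since $s\in[t,t+1]$ implies $e^{\eta(s-t-1)} \ge e^{-\eta}$, the exponential weight on the left-hand side can be absorbed into the constant $c$.

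It remains to show that the first term on the right-hand side of \eqref{plan_1} is eventually dominated by $c(1+r(\omega))$. First I would relate the tilded initial data to the physical initial data via $\ut_0(\omega) = u_0(\omega) - z_1(\omega)$ and $\vt_0(\omega) = v_0(\omega) - z_2(\omega)$, so that
\be
\label{plan_2}
\|\ut_0(\theta_{-t-1}\omega)\|^2 + \|\vt_0(\theta_{-t-1}\omega)\|^2 \le 2\bigl(\|u_0(\theta_{-t-1}\omega)\|^2 + \|v_0(\theta_{-t-1}\omega)\|^2\bigr) + 2\bigl(\|z_1(\theta_{-t-1}\omega)\|^2 + \|z_2(\theta_{-t-1}\omega)\|^2\bigr).
\ee
Now $(u_0,v_0)(\theta_{-t-1}\omega) \in B(\theta_{-t-1}\omega)$ with $B \in \mathcal{D}$ tempered, so $e^{-\eta(t+1)}\bigl(\|u_0(\theta_{-t-1}\omega)\|^2 + \|v_0(\theta_{-t-1}\omega)\|^2\bigr) \to 0$ as $t\to\infty$. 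Likewise $\|z_j(\theta_{-t-1}\omega)\|^2 = \|\phi_j\|^2 |y_j(\theta_{-t-1}\omega_j)|^2$ is tempered by \eqref{zz}, so $e^{-\eta(t+1)}\|z_j(\theta_{-t-1}\omega)\|^2 \to 0$ as well. Hence there is $T_B(\omega)>0$ such that for $t\ge T_B(\omega)$ the entire first term on the right-hand side of \eqref{plan_1} is bounded by $1 + r(\omega)$, and combining with the exponential absorption gives both inequalities of the lemma.

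There is essentially no obstacle beyond correctly tracking the shift $\omega \mapsto \theta_{-t-1}\omega$ and invoking temperedness of $B$ and of the stationary processes $z_1,z_2$; the result is a direct corollary of Lemma \ref{lem41a} on a unit time window, with the first right-hand side term collapsing thanks to the exponential decay $e^{-\eta(t+1)}$ against the tempered growth of the initial and stationary data.
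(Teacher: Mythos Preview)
Your proposal is correct and follows essentially the same route as the paper's proof: substitute $t\mapsto t+1$, $T_1:=t$ in Lemma~\ref{lem41a}, use $e^{\eta(s-t-1)}\ge e^{-\eta}$ on $[t,t+1]$ to strip the weight, expand $\ut_0,\vt_0$ in terms of $u_0,v_0,z_1,z_2$, and invoke temperedness of $B$ and of $\|z_j\|^2$ to absorb the decaying term for large $t$. There is no substantive difference between your argument and the paper's.
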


\begin{proof}
First replacing $t$ by $t+1$ and then
replacing $T_1$ by $t$ in \eqref{lem41a_2}, we find that
$$
\int_{t }^{t+1 }
 e^{\eta (s-t-1)}
\| \nabla \ut(s, \theta_{-t-1} \omega, \ut_0(\theta_{-t-1} \omega)  )\|^2 ds
$$
\be
\label{p42_1}
\le c  e^{-\eta (t+1)} \left (
 \| \ut _0(\theta_{-t-1} \omega) \|^2
+  \| \vt _0(\theta_{-t-1} \omega) \|^2 \right )
 + c(1 + r(\omega) ).
\ee
Note that $  e^{\eta (s-t-1)} \ge e^{-\eta}$ for $s \in [t, t+1]$. Hence
from \eqref{p42_1} we get that
$$
e^{-\eta}
\int_{t }^{t+1 }
\| \nabla \ut(s, \theta_{-t-1} \omega, \ut_0(\theta_{-t-1} \omega)  )\|^2 ds
$$
$$
\le c  e^{-\eta (t+1)} \left (
 \| \ut _0(\theta_{-t-1} \omega) \|^2
+  \| \vt _0(\theta_{-t-1} \omega) \|^2 \right )
 + c(1 + r(\omega) ).
$$
$$
\le c e^{-\eta (t+1)}
  \left ( \| u_0 ( \theta_{-t-1} \omega) \|^2
  + \| v_0 ( \theta_{-t-1} \omega) \|^2 \right )
 $$
   \be
\label{p42_2}
+ c e^{-\eta (t+1)} \left ( \| z_1 (\theta_{-t-1} \omega) \|^2
 +  \| z_2 (\theta_{-t-1} \omega) \|^2
\right )
+c( 1+ r(\omega)
 ).
\ee
Since $\|u_0(\omega)\|^2$,
$\|v_0(\omega)\|^2$,
$\|z_1(\omega)\|^2$
 and $\|z_2(\omega)\|^2$
are tempered,  there is $T _B(\omega)>0$
such that for all $t \ge T _B(\omega)$,
$$
c e^{-\eta (t+1)}
  \left ( \| u_0 ( \theta_{-t-1} \omega) \|^2
  + \| v_0 ( \theta_{-t-1} \omega) \|^2
+   \| z_1 (\theta_{-t-1} \omega) \|^2
 +  \| z_2 (\theta_{-t-1} \omega) \|^2
\right )
\le c( 1+ r(\omega)),
$$
which along with \eqref{p42_2} shows that, for all $t \ge T_B(\omega)$,
\be
\label{p42_3}
\int_{t }^{t+1 }
\| \nabla \ut(s, \theta_{-t-1} \omega, \ut_0(\theta_{-t-1} \omega)  )\|^2 ds
  \le 2 e^\eta (1 + r(\omega) ).
\ee
Using \eqref{lem41a_1} and repeating the above process, we can also find that,
for  $t \ge T_B(\omega)$,
\be
\label{p42_4}
   \int_{t }^{t+1 }
\|  u(s, \theta_{-t-1} \omega, u_0(\theta_{-t-1} \omega)  )\|^p_p ds
\le 2 e^\eta (1 + r(\omega) ).
\ee
Then the lemma follows from \eqref{p42_3}-\eqref{p42_4}.
\end{proof}

\begin{lem}
\label{lem42a}
Assume that $g, h \in L^2(\R^n)$ and \eqref{f1}-\eqref{f4}
hold.   Let   $B=\{B(\omega)\}_{\omega \in \Omega}\in \mathcal{D}$.
Then for $P$-a.e. $\omega \in \Omega$, there exists    $T_B(\omega)>0$
such that   for all $t \ge T_B(\omega)$,
$$
\int_t^{t+1} \| \nabla u(s, \theta_{-t-1} \omega, u_0(\theta_{-t-1 } \omega) \|^2 ds
\le c (1+ r  (\omega) ),
$$
where $c$ is a positive deterministic constant and
  $r(\omega)$ is the  tempered function   in \eqref{z2}.
\end{lem}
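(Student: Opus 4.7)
The plan is to reduce this bound for $u$ back to the already-established bound for $\tilde u$ in Lemma \ref{lem42} together with the explicit formula $z_1(\theta_t\omega) = \phi_1\, y_1(\theta_t\omega_1)$ and the temperedness estimate \eqref{zz}.

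First, I would use the relation $u(s,\omega,u_0) = \tilde u(s,\omega,u_0 - z_1(\omega)) + z_1(\theta_s\omega)$ from \eqref{uv}. Replacing $\omega$ by $\theta_{-t-1}\omega$ gives
$$
u(s,\theta_{-t-1}\omega,u_0(\theta_{-t-1}\omega))
= \tilde u(s,\theta_{-t-1}\omega,\tilde u_0(\theta_{-t-1}\omega))
+ z_1(\theta_{s-t-1}\omega),
$$
and taking gradients (noting that $\nabla z_1(\theta_\tau\omega) = y_1(\theta_\tau\omega_1)\,\nabla \phi_1$ since $\phi_1$ is deterministic) yields
$$
\|\nabla u(s,\theta_{-t-1}\omega,u_0(\theta_{-t-1}\omega))\|^2
\le 2\|\nabla \tilde u(s,\theta_{-t-1}\omega,\tilde u_0(\theta_{-t-1}\omega))\|^2
+ 2\|\nabla \phi_1\|^2\, |y_1(\theta_{s-t-1}\omega_1)|^2.
$$

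Next I would integrate over $s \in [t,t+1]$. The first term is handled directly by Lemma \ref{lem42}, which gives $\int_t^{t+1}\|\nabla\tilde u\|^2\,ds \le c(1+r(\omega))$ provided $t \ge T_B(\omega)$. For the second term, substituting $\tau = s-t-1$ changes the interval of integration to $[-1,0]$, so by \eqref{zz}
$$
\int_t^{t+1}|y_1(\theta_{s-t-1}\omega_1)|^2\,ds
= \int_{-1}^0 |y_1(\theta_\tau\omega_1)|^2\,d\tau
\le r(\omega)\int_{-1}^0 e^{\eta |\tau|/2}\,d\tau
\le e^{\eta/2}\,r(\omega).
$$
Combining these two bounds gives the desired estimate with a constant $c$ depending only on $\|\nabla\phi_1\|$, $\eta$ and the constant from Lemma \ref{lem42}.

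I do not anticipate any real obstacle here: the lemma is essentially a corollary of Lemma \ref{lem42}, since passing from $\tilde u$ to $u$ only costs a controlled tempered term coming from $z_1$. The only point requiring a little care is that the substitution $\tau = s-t-1$ must be made before invoking \eqref{zz}, so that the $\omega$ appearing in $r(\omega)$ is the correct (fixed) one and not shifted by $\theta_{-t-1}$.
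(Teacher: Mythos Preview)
Your proposal is correct and follows essentially the same approach as the paper: both split $\nabla u$ into $\nabla\tilde u$ and $\nabla z_1$ via \eqref{uv}, invoke Lemma~\ref{lem42} for the $\tilde u$ part, and control the $z_1$ part through $\|\nabla\phi_1\|^2|y_1(\theta_{s-t-1}\omega_1)|^2$ together with \eqref{zz}. The only cosmetic difference is that the paper bounds the $z_1$ term pointwise in $s$ before integrating, whereas you integrate first and then apply \eqref{zz}; the outcome is the same.
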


\begin{proof}
Let $T_B(\omega)$  be the positive constant in Lemma \ref{lem42},
take $t \ge T_B(\omega)$ and $s \in (t, t+1)$.
By \eqref{uv} we find that
$$
 \| \nabla u  (s, \theta_{-t-1} \omega, u_0(\theta_{-t-1 } \omega) \|^2
 =
  \| \nabla \ut  (s, \theta_{-t-1} \omega, \ut_0(\theta_{-t-1 } \omega)  )
+ \nabla z_1 (\theta_{s-t-1} \omega) \|^2
 $$
 \be
 \label{p42a_1}
 \le  2 \| \nabla \ut  (s, \theta_{-t-1} \omega, \ut_0(\theta_{-t-1 } \omega)  ) \|^2
+2 \|\nabla z_1 (\theta_{s-t-1} \omega) \|^2 .
\ee
By \eqref{zz} we have
\be
\label{p42a_2}
2 \|\nabla z_1 (\theta_{s-t-1} \omega) \|^2
= 2 \| \nabla \phi_1 \|^2 |y_1 (\theta_{s-t-1} \omega_1) |^2
\le c e^{{\frac \eta 2} ( t+1-s)} r (\omega)
\le c e^{{\frac \eta 2}  } r (\omega).
\ee
 Now  integrating  \eqref{p42a_1}  with respect to $s$
over $(t, t+1)$, by Lemma \ref{lem42}
and inequality \eqref{p42a_2}, we get that
\be
\label{p42a_8}
\int_t^{t+1} \| \nabla u (s, \theta_{-t-1} \omega, u_0(\theta_{-t-1 } \omega) \|^2 ds
\le c_1 + c_2 r  (\omega)  .
\ee
Then the lemma follows  from \eqref{p42a_8}.
\end{proof}

Next, we derive uniform estimates on $u$ in $\hone$.
\begin{lem}
\label{lem43}
Assume that $g, h \in L^2(\R^n)$ and \eqref{f1}-\eqref{f4}
hold.   Let   $ B= \{B(\omega)\}_{\omega \in \Omega}\in \mathcal{D}$.
Then for $P$-a.e. $\omega \in \Omega$, there exists    $T_B(\omega)>0$
such that   for all $t \ge T_B(\omega)$,
$$
  \| \nabla u (t, \theta_{-t } \omega, u_0(\theta_{-t  } \omega) ) \|^2
\le c (1+ r  (\omega) ),
$$
where $c$ is a positive deterministic constant and
  $r(\omega)$ is the  tempered function
 in \eqref{z2}.
\end{lem}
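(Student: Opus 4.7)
The plan is to upgrade the time-integrated bounds on $\|\nabla \ut\|^2$ and $\|u\|_p^p$ established in Lemmas \ref{lem41a} and \ref{lem42a} to a pointwise-in-time bound by means of a uniform Gronwall argument applied to an $H^1$ energy identity for $(\ut,\vt)$. I would begin by taking the $L^2(\R^n)$ inner product of \eqref{u1} with $-\beta\Delta \ut$ and of \eqref{v1} with $-\alpha\Delta \vt$, and then adding. The linear coupling terms $\pm\alpha\beta(\nabla \vt,\nabla \ut)$ cancel exactly, producing an identity of the form
\begin{equation*}
\frac{d}{dt}\bigl(\beta\|\nabla \ut\|^2+\alpha\|\nabla \vt\|^2\bigr)+2\beta\lambda\|\nabla \ut\|^2+2\alpha\delta\|\nabla \vt\|^2+2\beta\|\Delta \ut\|^2=2\,\mathrm{RHS},
\end{equation*}
in which the contributions from the source terms $g$, $h$, $\Delta z_1$, $z_2$ and $z_1$ are handled by Cauchy--Schwarz, absorbing small multiples of $\|\Delta \ut\|^2$ and $\|\nabla \vt\|^2$ into the left-hand side; the hypothesis $h\in\hone$ is used here precisely to give meaning to $(\nabla h,\nabla \vt)$.

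For the nonlinear contribution I would use $\Delta \ut=\Delta u-\Delta z_1(\theta_t\omega)$ and integrate by parts in the first piece, writing
\begin{equation*}
-\bigl(f(\cdot,u),\Delta \ut\bigr)=\int_{\R^n}\partial_x f(x,u)\cdot\nabla u\,dx+\int_{\R^n} f_s(x,u)|\nabla u|^2\,dx+\bigl(f(\cdot,u),\Delta z_1(\theta_t\omega)\bigr).
\end{equation*}
The first integral is at most $\tfrac12\|\psi_3\|^2+\tfrac12\|\nabla u\|^2$ by \eqref{f4}; the second is at most $\beta\|\nabla u\|^2$ by \eqref{f3} (only the upper bound on $f_s$ is required); the last is bounded, via \eqref{f2} and $\phi_1\in W^{2,p}(\R^n)$, by $\|f\|_q\|\Delta z_1\|_p\le c(\|u\|_p^p+|y_1(\theta_t\omega_1)|^p+1)$. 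Combined with $\|\nabla u\|^2\le 2\|\nabla \ut\|^2+2\|\nabla z_1\|^2$, these estimates lead to a differential inequality
\begin{equation*}
\frac{dy}{dt}\le c_1 y+c_2\|u\|_p^p+R(\theta_t\omega),\qquad y(t):=\beta\|\nabla \ut(t)\|^2+\alpha\|\nabla \vt(t)\|^2,
\end{equation*}
where $R$ collects tempered random forcings controlled through \eqref{zz}.

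I would then apply the uniform Gronwall lemma on the interval $[t-1,t]$. The inputs $\int_{t-1}^t\|\nabla \ut\|^2\,ds\le c(1+r(\omega))$ and $\int_{t-1}^t\|u\|_p^p\,ds\le c(1+r(\omega))$ for $t\ge T_B(\omega)$ follow from \eqref{lem41a_1}--\eqref{lem41a_2} after discarding the (bounded) exponential weight on $[t-1,t]$; the corresponding $L^1$-in-time bound on $\|\nabla \vt\|^2$ is supplied by a preliminary scalar Gronwall on the ODE satisfied by $\|\nabla \vt\|^2$, which is forced by $\|\nabla \ut\|^2$, $\|\nabla h\|^2$ and $\|\nabla z_1\|^2$. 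This yields $y(t)\le c(1+r(\omega))$, after which $\|\nabla u\|^2\le 2\|\nabla \ut\|^2+2\|\nabla z_1\|^2$, the tempered estimate $\|\nabla z_1(\theta_t\omega)\|^2\le c\,r(\omega)$ via \eqref{zz}, and the substitution $\omega\mapsto\theta_{-t}\omega$ produce the asserted inequality.

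The main obstacle is the nonlinear term. A direct estimate $|(f,\Delta \ut)|\le\|f\|\,\|\Delta \ut\|$ would require control of $\|u\|_{2(p-1)}$, which is unavailable for $p>2$, while a plain integration by parts $-(f,\Delta \ut)=\int\nabla f\cdot\nabla \ut$ introduces an awkward cross term $\int f_s\,\nabla z_1\cdot\nabla \ut$ that cannot be bounded with only the one-sided condition \eqref{f3}. The decomposition above circumvents both difficulties: it isolates the friendly dissipative-type term $\int f_s|\nabla u|^2\le\beta\|\nabla u\|^2$ and replaces the offending mixed term by $(f,\Delta z_1)$, which is controlled thanks to $\Delta z_1\in L^p(\R^n)$ (provided by $\phi_1\in W^{2,p}(\R^n)$) and the already-established integral-in-time bound on $\|u\|_p^p$ from Lemma \ref{lem42}.
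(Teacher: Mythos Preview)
Your nonlinear estimate is exactly the right decomposition---the paper does the same splitting $\Delta\ut=\Delta u-\Delta z_1$, integrates by parts on the first piece, and uses $(f,\Delta z_1)$ with $\phi_1\in W^{2,p}$ on the second. That part of your proposal is fine.

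The genuine gap is in your treatment of the coupling. You take the inner product of \eqref{v1} with $-\alpha\Delta\vt$ and then need $\int_{t-1}^t\|\nabla\vt\|^2\,ds<\infty$ as an input to the uniform Gronwall lemma. But the $v$-equation is an ODE with no spatial smoothing: if $\vt_0\in L^2(\R^n)$ only (which is all that is assumed, since $(u_0,v_0)\in B(\omega)\subset L^2\times L^2$), then $\vt(t)$ stays in $L^2$ and $\|\nabla\vt(t)\|$ is in general infinite for all $t\ge 0$. Your ``preliminary scalar Gronwall on the ODE satisfied by $\|\nabla\vt\|^2$'' carries the term $e^{-\delta t}\|\nabla\vt_0\|^2$, which is unbounded; the argument therefore does not close. (This is also why the lemma is stated with only $h\in L^2$, whereas your argument invokes $h\in H^1$.)

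The paper avoids this entirely by \emph{not} pairing the $\vt$-equation at the $H^1$ level. It takes the inner product of \eqref{u1} with $\Delta\ut$ alone and controls the coupling term directly via
\[
\alpha|(\vt,\Delta\ut)|\le \tfrac14\|\Delta\ut\|^2+\alpha^2\|\vt\|^2,
\]
absorbing $\tfrac14\|\Delta\ut\|^2$ into the left-hand side and using only the $L^2$ bound on $\vt$ from Lemma~\ref{lem41}. This yields
\[
\frac{d}{dt}\|\nabla\ut\|^2\le c\bigl(\|\nabla u\|^2+\|u\|_p^p+\|\vt\|^2\bigr)+p_2(\theta_t\omega),
\]
and now the uniform Gronwall argument on $[t,t+1]$ goes through using Lemmas~\ref{lem42} and~\ref{lem42a} together with the pointwise $L^2$ bound on $\vt$. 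The fix to your argument is simply to drop the $-\alpha\Delta\vt$ pairing and handle $\alpha(\vt,\Delta\ut)$ this way.
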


\begin{proof}
Taking the inner product of  \eqref{u1}
  with $\Delta \ut $ in $L^2(\R^n)$, we get that
$$
{\frac 12} {\frac d{dt}} \| \nabla \ut  \|^2
+ \lambda \| \nabla  \ut  \|^2
+ \| \Delta \ut  \|^2
$$
\be
\label{p43_1}
=\alpha (\vt, \Delta \ut)
-\ii f(x, u) \Delta \ut dx
-(g + \Delta \zto -\alpha \ztotwo, \Delta \ut ).
\ee
Note that the first term on the right-hand side of \eqref{p43_1}
is bounded by
\be
\label{p43_1_a1}
\alpha |(\vt, \Delta \ut)|
\le \alpha \| \vt \| \| \Delta \ut \|
\le {\frac 14} \|  \Delta \ut \|^2 + \alpha^2 \| \vt \|^2.
\ee
For  the nonlinear  term in \eqref{p43_1},
by \eqref{f2}-\eqref{f4},  we have
$$
-\ii f(x, u) \;  \Delta \ut dx
= -\ii f(x, u) \;  \Delta u dx +  \ii f(x, u) \; \Delta \zto dx
$$
$$
=\ii {\frac {\partial f}{\partial x}} (x, u) \; \nabla u dx
+ \ii {\frac {\partial f}{\partial u}} (x,u) \;  | \nabla  u |^2 dx
+ \ii f(x, u) \Delta \zto dx
$$
$$
\le \| \psi_3\| \| \nabla u \|
+ \beta \| \nabla u \|^2
+ \ii |f(x,u)| \; |\Delta \zto | dx
$$
$$
\le \| \psi_3\| \| \nabla u \|
+ \beta \| \nabla u \|^2
+  \alpha_2 \ii |u|^{p-1}  \; |\Delta \zto | dx
+
\ii | \psi_2 (x) |  \; |\Delta \zto | dx
$$
$$
\le
c \| \nabla u \|^2
+ {\frac {\alpha_2}{q}} \ii |u|^p dx
+ {\frac {\alpha_2}p}  \ii  \; |\Delta \zto |^p dx
+ c  ( \| \psi_2\|^2 + \| \psi_3 \|^2 ) + c \|   \Delta \zto  \|^2.
$$
\be
\label{p43_2}
\le c \left (
 \| \nabla u \|^2 + \| u \|^p_p \right )
 + c \left (
  \|   \Delta \zto  \|^2 +  \|   \Delta \zto  \|^p_p +1
\right ).
\ee
On the other hand,  the last term on the right-hand side
of \eqref{p43_1} is bounded by
$$
|(g, \Delta \ut)| + |  ( \Delta \zto , \Delta \ut )|
+ \alpha |(\ztotwo, \Delta \ut)|
$$
\be
\label{p43_3}
\le {\frac 14} \| \Delta \ut \|^2
+ c \left (
 \| g \|^2 + \|  \Delta \zto \|^2 + \| \ztotwo \|^2
\right ).
\ee
By \eqref{p43_1}-\eqref{p43_3}  we get that
$$
  {\frac d{dt}} \| \nabla \ut \|^2
+ 2 \lambda \| \nabla  \ut \|^2
+ \| \Delta  \ut \|^2
$$
\be
\label{p43_4}
 \le 2 \alpha^2 \| \vt \|^2 +
 c \left (
 \| \nabla u \|^2 + \| u \|^p_p \right )
 + c \left (
  \|   \Delta \zto  \|^2 +  \|   \Delta \zto  \|^p_p  + \| \ztotwo \|^2
+1
\right ).
\ee
Let
\be
\label{p43_5}
p_2(\theta_t \omega)=  c \left (
  \|   \Delta \zto  \|^2 +  \|   \Delta \zto  \|^p_p  +  \| \ztotwo \|^2
+1
\right ).
\ee
 Since $\zto =\phi_1  y_1(\theta_t \omega_1)$
 and $\ztotwo = \phi_2 y_2 (\theta_t \omega_2)$
 with
 $\phi_1  \in H^2(\R^n) \cap W^{2,p}(\R^n)$
and $\phi_2 \in \hone$,   we find that there are positive
constants $c_1$ and $c_2$ such that
$$
p_2(\theta_{t} \omega )
\le c_1 \sum_{j=1}^2  \left ( |y_j(\theta_t \omega_j)|^2
+ |y_j(\theta_t \omega_j)|^p_p \right ) + c_2,
$$
which  along  with \eqref{zz} shows that
\be
\label{p43_6}
p_2(\theta_{t} \omega )
\le
c_1 e^{{\frac \eta 2} |t| } r (\omega) +c_2, \quad \forall \; t \in \R.
\ee
By \eqref{p43_4}-\eqref{p43_5}, we find that
\be
\label{p43_7}
{\frac d{dt}} \| \nabla \ut \|^2
\le c \left (
 \| \nabla u \|^2 + \| u \|^p_p  + \| \vt \|^2 \right ) + p_2 (\theta_{t} \omega).
 \ee
 Let $T_B(\omega)$ be the positive  constant in Lemma \ref{lem42},
 take $t \ge T_B(\omega)$ and $s \in (t, t+1)$.  Then integrate \eqref{p43_7} over
 $(s, t+1)$    to get
 $$
 \| \nabla \ut(t+1, \omega, \ut_0(\omega)) \|^2
 \le \| \nabla \ut (s, \omega, \ut_0(\omega)) \|^2
 + \int_s^{t+1} p_2(\theta_\tau \omega) d \tau
 $$
 $$
 +
 c \int_s^{t+1}
 \left (
   \| \nabla u(\tau, \omega, u_0(\omega)) \|^2
+ \|  u(\tau, \omega, u_0(\omega)) \|^p_p
+ \| \vt (\tau, \omega, \vt_0(\omega) )\|^2
 \right ) d \tau
 $$
 $$
  \le \| \nabla \ut(s, \omega, \ut_0(\omega)) \|^2
 + \int_t^{t+1} p_2(\theta_\tau \omega) d \tau
 $$
 $$
 +
 c \int_t^{t+1}
 \left (
   \| \nabla u(\tau, \omega, u_0(\omega)) \|^2
+ \|  u(\tau, \omega, u_0(\omega)) \|^p_p
+ \| \vt (\tau, \omega, \vt_0(\omega) )\|^2
 \right ) d \tau .
 $$
 Now integrating the above with respect to $s$ over $(t, t+1)$, we find that
 $$  \| \nabla \ut (t+1, \omega, \ut_0(\omega)) \|^2
  \le \int_t^{t+1}  \| \nabla \ut (s, \omega, \ut_0(\omega)) \|^2 ds
 + \int_t^{t+1} p_2(\theta_\tau \omega) d \tau
 $$
 $$
 +
 c \int_t^{t+1}
 \left (
   \| \nabla u(\tau, \omega, u_0(\omega)) \|^2
+ \|  u(\tau, \omega, u_0(\omega)) \|^p_p
+ \| \vt (\tau, \omega, \vt_0(\omega) )\|^2
 \right ) d \tau .
 $$
 Replacing $\omega$ by $\theta_{-t -1} \omega$, we obtain that
 $$  \| \nabla \ut (t+1, \theta_{-t -1} \omega, \ut_0(\theta_{-t -1} \omega)) \|^2
 $$
 $$
  \le \int_t^{t+1}  \| \nabla \ut (s, \theta_{-t -1} \omega, \ut_0(\theta_{-t -1} \omega)) \|^2 ds
 + \int_t^{t+1} p_2(   \theta_{\tau -t -1} \omega) d \tau
 $$
$$
 +
 c \int_t^{t+1}
 \left (
   \| \nabla u(\tau, \theta_{-t -1}\omega, u_0(\theta_{-t -1}\omega)) \|^2
+ \|  u(\tau, \theta_{-t -1}\omega, u_0(\theta_{-t -1}\omega)) \|^p_p
 \right ) d \tau
$$
\be
\label{p43_10}
+ c \int_t^{t+1}
  \|  \vt (\tau, \theta_{-t -1}\omega, \vt_0(\theta_{-t -1}\omega)) \|^2 d \tau.
\ee
By Lemmas \ref{lem42} and \ref{lem42a}, it follows from
\eqref{p43_10} and \eqref{p43_6}  that, for all $t \ge T_B(\omega)$,
$$  \| \nabla \ut (t+1, \theta_{-t -1} \omega, \ut_0(\theta_{-t -1} \omega)) \|^2
 \le c_3 + c_4  r(\omega)
 + \int_{-1}^0 p_2(\theta_s \omega) ds
 $$
\be
\label{p43_11}
 \le c_3 + c_4  r(\omega)
 +   \int_{-1}^0  \left (c_1 e^{-{\frac \eta 2}s} r(\omega) + c_2 \right )  ds
 \le c_5 + c_6 r (\omega).
\ee
Then by \eqref{p43_11} and \eqref{uv}, we have,
for all $t \ge T_B(\omega)$,
$$
 \| \nabla u  (t+1, \theta_{-t-1} \omega, u_0(\theta_{-t-1 } \omega) \|^2
 =
  \| \nabla \ut (t+1, \theta_{-t-1} \omega, \ut_0(\theta_{-t-1 } \omega)  )
+ \nabla z_1( \omega) \|^2
 $$
$$
 \le  2 \| \nabla \ut (t+1, \theta_{-t-1} \omega, \ut_0(\theta_{-t-1 } \omega)  ) \|^2
+2 \|\nabla z_1 (  \omega) \|^2
\le c_7 + c_8 r(\omega),
$$
 which completes the proof.
\end{proof}

\begin{lem}
\label{lem44}
Assume that $g, h \in L^2(\R^n)$ and \eqref{f1}-\eqref{f4}
hold.   Let   $ B= \{B(\omega)\}_{\omega \in \Omega}\in \mathcal{D}$.
Then for every $\epsilon>0$ and  $P$-a.e. $\omega \in \Omega$,
there exist     $T= T_B(\omega, \epsilon)>0$ and
$R= R (\omega, \epsilon)>0$
such that    for all $t \ge T  $,
$$
 \int_{|x| \ge R } \left (
| \ut(t, \theta_{-t } \omega, \ut_0(\theta_{-t  } \omega) ) |^2
 +
 | \vt(t, \theta_{-t } \omega, \vt_0(\theta_{-t  } \omega) ) |^2
 \right )
 dx \le \epsilon.
$$
\end{lem}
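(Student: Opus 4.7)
The plan is to adapt the cut-off technique of Wang and of Bates-Lu-Wang to the coupled system. Let $\rho\in C^1([0,\infty),[0,1])$ satisfy $\rho(s)=0$ for $0\le s\le 1$, $\rho(s)=1$ for $s\ge 2$, and $|\rho'(s)|\le M$ for some $M>0$, and set $\rho_k(x):=\rho(|x|^2/k^2)$ for $k>0$. Then $\rho_k$ vanishes on $\{|x|\le k\}$, is identically $1$ on $\{|x|\ge k\sqrt 2\}$, and $|\nabla\rho_k|\le 2M\sqrt 2/k$ uniformly.

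The idea is to take the $L^2(\mathbb R^n)$ inner product of \eqref{u1} with $\beta\rho_k\tilde u$ and of \eqref{v1} with $\alpha\rho_k\tilde v$ and add them, exactly mirroring the derivation of \eqref{p41_1_a2} in Lemma \ref{lem41} but now weighted by $\rho_k$. The symmetric cross-terms $\pm\alpha\beta\int\rho_k\tilde u\tilde v\,dx$ still cancel. Integration by parts on the Laplacian produces $\beta\int\rho_k|\nabla\tilde u|^2\,dx+\beta\int\tilde u\,\nabla\tilde u\cdot\nabla\rho_k\,dx$; the troublesome cross-term is bounded in absolute value by $(C/k)(\|\tilde u\|^2+\|\nabla\tilde u\|^2)$. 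The nonlinear term is estimated exactly as in \eqref{p41_2} with $\rho_k$ inserted, using \eqref{f1}--\eqref{f2}; all remaining inner products are handled by Cauchy-Schwarz and Young's inequality exactly as in \eqref{p41_3}--\eqref{p41_3_a2} with the integrals restricted to the support $\{|x|\ge k\}$ of $\rho_k$. Collecting terms, one obtains
\begin{equation*}
\frac{d}{dt}\!\int\!\rho_k(\beta|\tilde u|^2+\alpha|\tilde v|^2)dx+\eta\!\int\!\rho_k(\beta|\tilde u|^2+\alpha|\tilde v|^2)dx\le \tfrac{C}{k}(\|\tilde u\|^2+\|\nabla\tilde u\|^2)+Q_k(\theta_t\omega),
\end{equation*}
where $Q_k(\theta_t\omega)$ is a sum of $\int_{|x|\ge k}(|g|^2+|h|^2+|\psi_1|+|\psi_2|^2)dx$ and a term of the form $\bigl(1+|y_1(\theta_t\omega_1)|^p+|y_2(\theta_t\omega_2)|^2\bigr)\int_{|x|\ge k}(|\phi_1|^2+|\phi_1|^p+|\nabla\phi_1|^2+|\phi_2|^2)dx$.

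I then apply Gronwall's lemma over $[0,t]$, replace $\omega$ by $\theta_{-t}\omega$, and change variables $\tau\mapsto\tau-t$ in the driving terms (as done between \eqref{p41_8} and \eqref{p41_9}). The initial term $e^{-\eta t}\int\rho_k(\beta|\tilde u_0(\theta_{-t}\omega)|^2+\alpha|\tilde v_0(\theta_{-t}\omega)|^2)dx$ tends to zero as $t\to\infty$ because $B\in\mathcal D$ is tempered and $\|z_j(\theta_{-t}\omega)\|^2$ is tempered. The factor $(C/k)\int_0^t e^{\eta(\tau-t)}(\|\tilde u\|^2+\|\nabla\tilde u\|^2)d\tau$, after the shift, is controlled by Lemmas \ref{lem41}, \ref{lem41a}, and \ref{lem43} (which give uniform-in-$t$ bounds on $\|\tilde u\|^2+\|\nabla\tilde u\|^2$ for $t$ large), so it is bounded by $C(1+r(\omega))/k$. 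The forcing integral $\int_0^t e^{\eta(\tau-t)}Q_k(\theta_{\tau-t}\omega)d\tau$ is bounded using \eqref{zz}--\eqref{p41_6a1} by $C(1+r(\omega))\cdot\varepsilon_k$, where $\varepsilon_k=\int_{|x|\ge k}(|g|^2+|h|^2+|\psi_1|+|\psi_2|^2+|\phi_1|^2+|\phi_1|^p+|\nabla\phi_1|^2+|\phi_2|^2)dx\to 0$ as $k\to\infty$ by the hypotheses on $g,h,\psi_j,\phi_j$.

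Given $\varepsilon>0$, choose $k$ so that both $C(1+r(\omega))/k$ and $C(1+r(\omega))\varepsilon_k$ are $<\varepsilon/3$, then choose $T$ so large that the initial-data term is $<\varepsilon/3$ for all $t\ge T$. Setting $R=k\sqrt 2$ and using $\rho_k\equiv 1$ on $\{|x|\ge R\}$ gives the claim. The main obstacle, and the reason the previous $H^1$-estimates (Lemma \ref{lem43}) were prepared, is the cross-term $\beta\int\tilde u\,\nabla\tilde u\cdot\nabla\rho_k\,dx$: without a uniform bound on $\|\nabla\tilde u\|^2$ one cannot conclude that this contribution vanishes as $k\to\infty$, so proving the $H^1$ bound uniformly for large $t$ is the essential input that makes the cutoff argument work for this coupled system on $\mathbb R^n$.
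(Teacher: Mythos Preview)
Your argument is correct and follows essentially the same cutoff strategy as the paper: the same $\rho_k$, the same weighted inner products of \eqref{u1} with $\beta\rho_k\tilde u$ and \eqref{v1} with $\alpha\rho_k\tilde v$, the same bound $\tfrac{C}{k}(\|\tilde u\|^2+\|\nabla\tilde u\|^2)$ on the Laplacian cross-term, then Gronwall and the pullback shift (the paper integrates from a time $T_1>0$ rather than from $0$, but your choice is a harmless simplification). One small correction to your commentary: the control of $\tfrac{C}{k}\int_0^t e^{\eta(s-t)}\|\nabla\tilde u(s,\theta_{-t}\omega,\cdot)\|^2\,ds$ comes directly from the weighted time-integral estimate of Lemma~\ref{lem41a} (with $T_1=0$), not from the pointwise $H^1$ bound of Lemma~\ref{lem43}---the paper cites Lemma~\ref{lem43} in \eqref{p44_12} only to fix $T_1$ but then uses Lemma~\ref{lem41a} for the gradient term in \eqref{p44_21}, and Lemma~\ref{lem43}'s real role is in the compactness step (Lemma~\ref{lem51}); likewise the companion integral $\int_0^t e^{\eta(s-t)}\|\tilde u\|^2\,ds$ is not covered by any of the lemmas you cite and requires a short direct estimate from \eqref{p41_8}, which the paper carries out explicitly.
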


\begin{proof}
Let $\rho$ be a smooth function defined on $   \R^+$ such that
$0\le \rho(s) \le 1$ for all $s \in \R^+$, and
$$
\rho (s) = \left \{
\begin{array}{ll}
  0 & \quad \mbox{for} \ 0\le s \le 1; \\
 1 & \quad \mbox{for}  \  s \ge 2.
\end{array}
\right.
$$
Then there exists a positive  constant
$c$ such that
$ | \rho^\prime (s) | \le c$ for all $s \in \R^+$.

Taking the inner product of \eqref{u1}
with $\beta \rh \ut$ in $L^2(\R^n)$ we find that
$$
{\frac 12} \beta {\frac d{dt}} \ii \rh |\ut|^2 dx
 + \lambda \beta  \ii \rh | \ut |^2 dx - \beta \ii \rh \ut \Delta \ut dx
+ \alpha \beta \ii \rh \ut \vt dx
$$
\be
\label{p44_1_a1}
= \beta \ii f(x, u) \rh \ut dx + \beta \ii \left (
g + \Delta \zto -\alpha \ztotwo
\right ) \rh \ut dx.
 \ee
 Taking the inner product of \eqref{v1} with $\alpha \rh   \vt$ in $L^2(\R^n)$ we find that
$$
{\frac 12} \alpha  {\frac d{dt}} \ii \rh  |\vt |^2 dx  + \alpha \delta  \ii \rh |\vt |^2 dx
-  \alpha \beta \ii \rh \ut \vt dx
$$
 \be
\label{p44_1_a2}
= \alpha \ii \rh h \vt dx  + \alpha \beta \ii\rh
  z_1 (\theta_t \omega )  \vt dx .
\ee
  Adding \eqref{p41_1_a1} and \eqref{p41_1_a2}, we obtain that
  $$
{\frac 12}   {\frac d{dt}} \left ( \beta  \ii \rh |\ut|^2 dx
+ \alpha \ii \rh | \vt |^2 dx \right )
$$
$$
 + \lambda \beta  \ii \rh | \ut |^2 dx
+ \alpha \delta  \ii \rh |\vt |^2 dx
- \beta \ii \rh \ut \Delta \ut dx
$$
$$
= \beta \ii f(x, u) \rh \ut dx + \beta \ii \left (
g + \Delta \zto -\alpha \ztotwo
\right ) \rh \ut dx
$$
\be
\label{p44_1}
+ \alpha \ii \rh h \vt dx  + \alpha \beta \ii\rh
  z_1 (\theta_t \omega )  \vt dx .
 \ee
   We now  estimate the terms in \eqref{p44_1} as follows.
   First we have
  $$
    - \beta \ii  \rh \ut \Delta \ut  dx
    = \beta \ii | \nabla \ut  |^2 \rh dx
    +  \beta \ii \ut \rhp {\frac {2x}{k^2}} \cdot \nabla \ut dx
    $$
     \be
   \label{p44_2}
    = \beta \ii | \nabla \ut |^2 \rh dx
    + \beta  \int_{k\le |x| \le \sqrt{2} k} \ut \rhp {\frac {2x}{k^2}} \cdot \nabla \ut dx.
    \ee
    Note that
    the  second term on the right-hand  side of \eqref{p44_2} is bounded by
    $$
   \beta  |\int_{k\le |x| \le \sqrt{2} k} \ut  \rhp {\frac {2x}{k^2}} \cdot \nabla \ut dx|
    \le {\frac {2 \sqrt{2}}k} \beta  \int _{k \le |x| \le \sqrt{2} k} |\ut| \;
 |\rhp| \;  | \nabla \ut| dx
    $$
    \be
    \label{p44_3}
    \le {\frac ck} \ii |\ut | \; |\nabla \ut | dx
    \le {\frac ck} (\| \ut \|^2 + \| \nabla \ut \|^2 ).
    \ee
    By \eqref{p44_2}-\eqref{p44_3}, we find that
    \be
    \label{p44_4}
        - \beta  \ii   \rh \ut \Delta \ut  dx
        \ge  \beta \ii |\nabla  \ut  |^2 \rh dx
        -    {\frac ck} (\| \ut  \|^2 + \| \nabla  \ut \|^2 ).
        \ee
        For the nonlinear term in \eqref{p44_1},  we have
        \be
\label{p44_5}
  \beta \ii f(x, u) \rh \ut dx
 =  \beta  \ii f(x, u) \rh u dx - \beta  \ii f(x, u) \rh \zto dx.
 \ee
 By \eqref{f1}, the first term on the right-hand side of
 \eqref{p44_5} is bounded by
\be
\label{p44_6}
 \beta \ii f(x, u) \rh u dx
 \le
 -\alpha_1 \beta  \ii |u|^p \rh dx
 + \beta \ii \psi_1 \rh dx.
 \ee
 By \eqref{f2}, the second term on the right-hand side of \eqref{p44_5}
 is bounded by
 $$
\beta |  \ii f(x, u) \rh \zto dx |
$$
$$
\le \alpha_2  \beta \ii |u|^{p-1} \rh |\zto | dx
+ \beta  \ii |\psi_2| \rh | \zto | dx
$$
$$
\le {\frac 12} \alpha_1\beta
\ii |u|^p \rh dx + c \ii |\zto|^p \rh dx
$$
\be
\label{p44_7}
+ {\frac 12}\beta  \ii |\zto|^2 \rh dx
+{\frac 12}\beta  \ii \psi_2^2 \rh dx.
\ee
Then it follows from \eqref{p44_5}-\eqref{p44_7} that
$$
  \beta \ii f(x, u) \rh \ut dx
  \le
 -  {\frac 12} \alpha_1\beta
\ii |u|^p \rh dx + \beta \ii \psi_1 \rh dx
$$
\be
\label{p44_8}
+{\frac 12}\beta  \ii \psi_2^2 \rh dx
 + c \ii  \left (  |\zto|^p +  | \zto|^2 \right ) \rh dx.
\ee
For the second  term on the right-hand side of \eqref{p44_1}, we have that
$$
\beta | \ii (g + \Delta \zto -\alpha \ztotwo ) \rh \ut dx|
$$
\be
\label{p44_9}
\le
{\frac 12} \lambda \beta  \ii \rh |\ut|^2 dx
+ c \ii (g^2 + |\Delta \zto |^2  + | \ztotwo |^2 ) \rh dx.
\ee
For the last two terms  on the right-hand side of
\eqref{p44_1}, we find that
$$
\alpha \ii \rh h \vt dx  + \alpha \beta \ii\rh
  z_1 (\theta_t \omega )  \vt dx
  $$
  \be
  \label{p44_9_a1}
  \le
  {\frac 12} \alpha \delta \ii \rh | \vt |^2 dx
  +c \ii \rh \left ( |\zto|^2 + |h|^2 \right ) dx.
\ee
Finally, by \eqref{p44_1}, \eqref{p44_4} and
\eqref{p44_8}-\eqref{p44_9_a1}, we obtain that
 $$
{\frac 12}   {\frac d{dt}} \left ( \beta  \ii \rh |\ut|^2 dx
+ \alpha \ii \rh | \vt |^2 dx \right )
$$
$$
+ {\frac 12} \lambda \beta \ii \rh |\ut |^2 dx
+ {\frac 12} \alpha \delta \ii \rh | \vt |^2 dx
$$
$$
+ {\frac 12} \alpha_1 \beta \ii \rh |u|^p dx
+ \beta \ii \rh |\nabla \ut |^2 dx
$$
$$
\le {\frac ck} (\| \nabla \ut \|^2 + \| \ut  \|^2)
+ c
\ii
\left ( |\psi_1| +  |\psi_2|^2
+   g^2  + h^2 \right )\rh
dx
$$
\be
\label{p44_10}
+c \ii \left (
|\Delta \zto |^2  + |\zto |^2 + | \zto |^p  + |\ztotwo |^2 \right )
\rh dx.
\ee
Note that \eqref{p44_10} implies that
 $$
   {\frac d{dt}} \left ( \beta  \ii \rh |\ut|^2 dx
+ \alpha \ii \rh | \vt |^2 dx \right )
$$
$$
+  \eta   \left ( \beta  \ii \rh |\ut|^2 dx
+ \alpha \ii \rh | \vt |^2 dx \right )
$$
$$
\le {\frac ck} (\| \nabla \ut \|^2 + \| \ut  \|^2)
+ c
\ii
\left ( |\psi_1| +  |\psi_2|^2
+   g^2  + h^2 \right )\rh
dx
$$
\be
\label{p44_11}
+c \ii \left (
|\Delta \zto |^2  + |\zto |^2 + | \zto |^p  + |\ztotwo |^2 \right )
\rh dx.
\ee
By Lemmas \ref{lem41} and \ref{lem43}, there is
$T_1 = T_1(B,  \omega)>0$ such that for all
$t \ge T_1$,
\be
\label{p44_12}
\| \ut (t, \theta_{-t} \omega, \ut_0(\theta_{-t}\omega) ) \|^2_{H^1(\R^n)}
\le c (1 + r(\omega) ).
\ee
Now integrating \eqref{p44_11} over $(T_1, t)$, we get that, for
all $t \ge T_1$,
$$
  \beta  \ii \rh |\ut (t, \omega, \ut_0(\omega) )|^2 dx
+ \alpha \ii \rh | \vt (t, \omega, \vt_0(\omega) ) |^2 dx
$$
$$
\le e^{\eta(T_1-t)} \left (
  \beta  \ii \rh |\ut (T_1, \omega, \ut_0(\omega) )|^2 dx
+ \alpha \ii \rh | \vt (T_1, \omega, \vt_0(\omega) ) |^2 dx
\right )
$$
$$
 +  {\frac ck}  \int_{T_1}^t
e^{\eta (s-t)}
\left  (\| \nabla \ut (s, \omega, \ut_0(\omega)) \|^2
+ \| \ut (s, \omega, \ut_0(\omega)) \|^2 \right ) ds
$$
$$
+ c \int_{T_1}^t
e^{\eta (s-t)}
\ii
\left (   |\psi_1| +  |\psi_2|^2
+   g^2 + h^2  \right )\rh
dx ds
$$
\be
\label{p44_13}
+c  \int_{T_1}^t
e^{\eta (s-t)}\ii \left (
|\Delta z_1(\theta_s \omega) |^2
+ | z_1(\theta_s \omega) |^2
+ |  z_1(\theta_s \omega) |^p
+ |  z_2(\theta_s \omega) |^2 \right )
\rh dx ds.
\ee
Replacing $\omega$ by $\theta_{-t} \omega$, we obtain from
\eqref{p44_13} that, for all $t \ge  T_1$,
$$
  \beta  \ii \rh |\ut (t, \theta_{-t}\omega, \ut_0(\theta_{-t}\omega) )|^2 dx
+ \alpha \ii \rh | \vt (t, \theta_{-t}\omega, \vt_0(\theta_{-t}\omega) ) |^2 dx
$$
$$
\le\beta
 e^{\eta(T_1-t)}
    \ii \rh |\ut (T_1, \theta_{-t}\omega, \ut_0(\theta_{-t}\omega) )|^2 dx
    $$
    $$
+ \alpha e^{\eta(T_1-t)} \ii \rh |
\vt (T_1, \theta_{-t}\omega, \vt_0(\theta_{-t}\omega) ) |^2 dx
$$
$$
 +  {\frac ck}  \int_{T_1}^t
e^{\eta (s-t)}
\| \nabla \ut (s, \theta_{-t}\omega, \ut_0(\theta_{-t}\omega)) \|^2  ds
$$
$$
  +  {\frac ck}  \int_{T_1}^t
e^{\eta (s-t)} \| \ut (s, \theta_{-t}\omega, \ut_0(\theta_{-t}\omega)) \|^2 ds
$$
$$
+ c \int_{T_1}^t
e^{\eta (s-t)}
\ii
\left (   |\psi_1| +  |\psi_2|^2
+   g^2 + h^2  \right )\rh
dx ds
$$
\be
\label{p44_14}
+c  \int_{T_1}^t
e^{\eta (s-t)}\ii \left (
|\Delta z_1(\theta_{s-t} \omega) |^2
+ | z_1(\theta_{s-t} \omega) |^2
+ |  z_1(\theta_{s-t} \omega) |^p
+ |   z_2(\theta_{s-t} \omega)    |^2 \right )
\rh dx ds.
\ee
In what follows, we estimate the terms in \eqref{p44_14}.
First replacing $t$ by $T_1$ and then replacing
$\omega$ by $\theta_{-t} \omega$ in \eqref{p41_8}, we have
the following bounds for the first  two terms  on the right-hand side of
\eqref{p44_14}.
$$
\beta
 e^{\eta(T_1-t)}
    \ii \rh |\ut (T_1, \theta_{-t}\omega, \ut_0(\theta_{-t}\omega) )|^2 dx
    $$
    $$
+ \alpha e^{\eta(T_1-t)} \ii \rh |
\vt (T_1, \theta_{-t}\omega, \vt_0(\theta_{-t}\omega) ) |^2 dx
$$
$$
\le
 e^{\eta(T_1-t)}  \left (
 e^{-\eta  T_1} \left  ( \beta  \| \ut_0 (\theta_{-t} \omega) \|^2
+  \alpha  \| \vt_0 (\theta_{-t}\omega) \|^2 \right )
+ \int_0^{T_1} e^{\eta(\tau -T_1)}  p_1(\theta_{\tau -t}  \omega ) d\tau
+ c \right )
$$
$$
\le
 e^{-\eta   t } \left  ( \beta  \| \ut_0 (\theta_{-t} \omega) \|^2
+  \alpha  \| \vt_0 (\theta_{-t}\omega) \|^2 \right )
+ \int_0^{T_1} e^{\eta(\tau -t)}  p_1(\theta_{\tau -t}  \omega ) d\tau
+ c  e^{\eta(T_1-t)}
$$
$$
\le
 e^{-\eta   t } \left  ( \beta  \| \ut_0 (\theta_{-t} \omega) \|^2
+  \alpha  \| \vt_0 (\theta_{-t}\omega) \|^2 \right )
+ \int_{-t}^{T_1-t} e^{\eta\tau}  p_1(\theta_{\tau }  \omega ) d\tau
+ c  e^{\eta(T_1-t)}
$$
$$
\le
 e^{-\eta   t } \left  ( \beta  \| \ut_0 (\theta_{-t} \omega) \|^2
+  \alpha  \| \vt_0 (\theta_{-t}\omega) \|^2 \right )
+ \int_{-t}^{T_1-t} e^{{\frac 1 2} \eta \tau}  c_6 r (  \omega ) d\tau
+ c  e^{\eta(T_1-t)}
$$
\be
\label{p44_15}
\le
 e^{-\eta   t } \left  ( \beta  \| \ut_0 (\theta_{-t} \omega) \|^2
+  \alpha  \| \vt_0 (\theta_{-t}\omega) \|^2 \right )
+ {\frac 2\eta} c_6 r(\omega) e^{{\frac 12}\eta (T_1-t)}
+ c  e^{\eta(T_1-t)}
\ee
  where we have used  \eqref{p41_6a1}.
  By \eqref{p44_15}, we find that, given $\epsilon>0$,
 there is $T_2 = (B, \omega, \epsilon) >T_1$ such that
 for all $t \ge T_2$,
 \be
\label{p44_20}
 e^{\eta(T_1-t)}  \left (
    \ii \rh  \left ( \beta |\ut (T_1, \theta_{-t}\omega, \ut_0(\theta_{-t}\omega) )|^2
+      \alpha  |
\vt (T_1, \theta_{-t}\omega, \vt_0(\theta_{-t}\omega) ) |^2  \right ) dx  \right )
\le \epsilon.
\ee
  By Lemma \ref{lem41a},
there is $T_3= T_3(B, \omega)>T_1$ such that
   the  third  term on the right-hand side of \eqref{p44_14}
satisfies, for all $t \ge T_3$,
 $$
  {\frac ck}  \int_{T_1}^t
e^{\eta (s-t)}
\| \nabla \ut  (s, \theta_{-t}\omega, \ut_0(\theta_{-t}\omega)) \|^2  ds
\le {\frac ck} (1+ r(\omega) ).
$$
And hence, there is $R_1 = R_1(\omega, \epsilon)>0$ such that
for all $t \ge T_3$ and $k \ge R_1$,
\be
\label{p44_21}
  {\frac ck}  \int_{T_1}^t
e^{\lambda (s-t)}
\| \nabla v (s, \theta_{-t}\omega, v_0(\theta_{-t}\omega)) \|^2  ds
\le \epsilon .
\ee
First replacing $t$ by $s$ and then replacing
$\omega$ by $\theta_{-t} \omega$ in \eqref{p41_8}, we  find  that
  the  fourth   term on the right-hand side of
\eqref{p44_14} satisfies
    $$
  {\frac ck} \int_{T_1}^t e^{\eta  (s-t)}
\|   \ut (s, \theta_{-t}\omega, \ut_0(\theta_{-t}\omega) ) \|^2 ds
  $$
  $$
  \le
  {\frac ck}  \int_{T_1}^t e^{-\eta t}  \left ( \beta
\| \ut_0(\theta_{-t} \omega)\|^2 + \alpha  \| \vt_0(\theta_{-t} \omega)\|^2 \right ) ds
$$
$$
  + {\frac ck} \int_{T_1}^t
  e^{\eta (s-t)} \int_0^s e^{\eta(\tau-s)} p_1(\theta_{\tau -t} \omega)
   d \tau ds
   + {\frac ck} \int_{T_1}^t e^{\lambda(s-t)} ds
$$
$$
  \le
  {\frac ck}   e^{-\eta t}  (t-T_1)  \left ( \beta
\| \ut_0(\theta_{-t} \omega)\|^2 + \alpha  \| \vt_0(\theta_{-t} \omega)\|^2 \right )
$$
$$
  + {\frac ck}
  + {\frac ck} \int_{T_1}^t
  \int_0^s e^{\eta (\tau-t)} p_1(\theta_{\tau -t} \omega)
   d \tau ds
$$
  $$
  \le
  {\frac ck}   e^{-\eta t}  (t-T_1) \left ( \beta
\| \ut_0(\theta_{-t} \omega)\|^2 + \alpha  \| \vt_0(\theta_{-t} \omega)\|^2 \right )
$$
$$
  + {\frac ck}
  + {\frac ck} \int_{T_1}^t
  \int_{-t}^{s-t} e^{\eta \tau} p_1(\theta_{\tau } \omega)
   d \tau ds
$$
$$
  \le
  {\frac ck}   e^{-\eta t}  (t-T_1) \left ( \beta
\| \ut_0(\theta_{-t} \omega)\|^2 + \alpha  \| \vt_0(\theta_{-t} \omega)\|^2 \right )
$$
$$
  + {\frac ck}
  + {\frac ck}  r(\omega)  \int_{T_1}^t
  \int_{-t}^{s-t} e^{{\frac 12}\eta \tau}   d \tau ds
$$
   $$
  \le
  {\frac ck}   e^{-\eta t}  (t-T_1)  \left ( \beta
\| \ut_0(\theta_{-t} \omega)\|^2 + \alpha  \| \vt_0(\theta_{-t} \omega)\|^2 \right )
  + {\frac ck}
  +   {\frac {4c}{\eta^2 k}}  r(\omega)  ,
$$
which shows that, there is $T_4 = T_4(B, \omega, \epsilon)>T_1 $  and
$R_2 =R_2(\omega, \epsilon)$ such that
for all $t \ge T_4$ and $k \ge R_2$,
\be
\label{p44_22}
  {\frac ck} \int_{T_1}^t e^{\eta (s-t)}
\|   \ut (s, \theta_{-t}\omega, \ut_0(\theta_{-t}\omega) ) \|^2 ds
\le \epsilon.
\ee
Note that $\psi_1 \in L^1(\R^n)$ and  $\psi_2, g, h   \in L^2(\R^n)$.
Therefore, there is $R_3 = R_3(\epsilon)$ such that
for all $k \ge R_3$,
$$
\int_{|x| \ge k}  \left (   |\psi_1| + |\psi_2|^2 +   |g |^2 +  |h|^2  \right ) dx
\le  \epsilon.
$$
Then for the
fifth   term on the right-hand side of
\eqref{p44_14}, we have
$$ c
\int_{T_1}^t e^{\eta (s-t)}
\ii  \left (   |\psi_1| + |\psi_2|^2 + | g|^2  + |h|^2  \right ) \rh  dx ds
$$
\be
\label{p44_23}
\le  c
\int_{T_1}^t e^{\eta (s-t)}
\int_{|x| \ge k}   \left (   |\psi_1| + |\psi_2|^2 +|g|^2 + |h|^2  \right )   dx ds
\le  c \epsilon \int_{T_1}^t e^{\eta (s-t)} ds
\le  c \epsilon,
\ee
where $c$ is independent of $\epsilon$.
Note that
and $\phi_1  \in H^2(\R^n) \cap W^{2,p}(\R^n)$
and $\phi_2 \in \hone$.  Hence
 there is  $R_4 = R_4(\omega, \epsilon)$ such that
 for all $k \ge R_4$,
\be
\label{RR4}
 \int_{|x| \ge k} \left ( |\phi_1 (x)|^2 + |\phi_1 (x)|^p
+ | \Delta \phi_1 (x) |^2  + |\phi_2  (x) |^2 \right ) dx
 \le {\frac \epsilon{r(\omega)}},
 \ee
 where $r(\omega)$ is the tempered function in
\eqref{z2}.
 By \eqref{RR4} and   \eqref{z2}-\eqref{z3},
 we have the following  bounds for the last term on the right-hand
 side of \eqref{p44_14}:
$$
  {c}  \int_{T_1}^t
e^{\eta (s-t)}\ii \left (
|\Delta  z_1 ( \theta_{s-t} \omega )   |^2
+ | z_1 ( \theta_{s-t} \omega )  |^2 + |  z_1( \theta_{s-t} \omega )  |^p
+ |  z_2 ( \theta_{s-t} \omega )  |^2
 \right )
\rh dx ds
$$
$$
\le
 {c}  \int_{T_1}^t
e^{\eta (s-t)}
\int_{|x| \ge k}
 \left (
|\Delta  z_1 ( \theta_{s-t} \omega )   |^2
+ | z_1 ( \theta_{s-t} \omega )  |^2 + |  z_1( \theta_{s-t} \omega )  |^p
+ |  z_2 ( \theta_{s-t} \omega )  |^2
 \right )
  dx ds
$$
$$
 \le  c  \int_{T_1}^t
e^{\eta  (s-t)}
\int_{|x| \ge k} \left (
|\Delta \phi_1 |^2 |  y_1 ( \theta_{s-t} \omega_1 )   |^2
+ | \phi_1 |^2 | y_1( \theta_{s-t} \omega_1 )  |^2
+ | \phi_1 |^p | y_1( \theta_{s-t} \omega _1)  |^p
+ | \phi_2 |^2 | y_2( \theta_{s-t} \omega _2)  |^2
\right )
  dx ds
$$
$$
 \le {\frac {c \epsilon}{ r(\omega)}}   \int_{T_1}^t
e^{\eta (s-t)}\sum_{j=1}^2
 \left ( |  y_j ( \theta_{s-t} \omega_j )   |^2
+ | y_j( \theta_{s-t} \omega _j)  |^p   \right )
    ds
 \le  {\frac {c \epsilon}{  r(\omega)}}    \int_{T_1}^t
e^{\eta (s-t)}  r(\theta_{s-t} \omega )
    ds
$$
\be
\label{p44_24}
\le  {\frac {c \epsilon}{  r(\omega)}}    \int_{T_1-t}^0
e^{\eta \tau}  r(\theta_{\tau} \omega )
    d\tau
    \le  {\frac {c \epsilon}{  r(\omega)}}    \int_{T_1-t}^0
e^{{\frac 12}\eta \tau}  r(   \omega )
    d\tau
    \le c \epsilon.
\ee
Let $T_5=T_5(B, \omega, \epsilon) = \max \{T_1,  T_2, T_3, T_4\}$
and $R_5=R_5(\omega, \epsilon) =\max \{ R_1, R_2, R_3, R_4\}$.
Then it follows from
\eqref{p44_14}, \eqref{p44_20}-\eqref{p44_24} that, for all
$t\ge T_5$ and $k \ge R_5$,
$$
  \beta  \ii \rh |\ut (t, \theta_{-t}\omega, \ut_0(\theta_{-t}\omega) )|^2 dx
+ \alpha \ii \rh | \vt (t, \theta_{-t}\omega, \vt_0(\theta_{-t}\omega) ) |^2 dx
\le c\epsilon,
$$
where $c$ is independent of $\epsilon$, and hence,  for all
$t\ge T_5$ and $k \ge R_5$,
$$
  \int_{|x| \ge \sqrt{2} k}  \left (   \beta
   |\ut (t, \theta_{-t}\omega, \ut_0(\theta_{-t}\omega) )|^2 dx
+ \alpha    | \vt (t, \theta_{-t}\omega, \vt_0(\theta_{-t}\omega) ) |^2
\right ) dx
\le c\epsilon.
$$
This completes the proof.
\end{proof}

We now derive uniform estimates on the tails of
$u$ and $v$ in $\h$.

\begin{lem}
\label{lem45}
Assume that $g , h \in L^2(\R^n)$ and \eqref{f1}-\eqref{f4}
hold.   Let   $ B= \{B(\omega)\}_{\omega \in \Omega}\in \mathcal{D}$.
Then for every $\epsilon>0$ and  $P$-a.e. $\omega \in \Omega$,
there exist     $T = T _B(\omega, \epsilon)>0$ and
$R = R  (\omega, \epsilon)>0$
such that,    for all $t \ge T  $,
$$
 \int_{|x| \ge R }
\left (
 | u  (t, \theta_{-t } \omega,u_0(\theta_{-t  } \omega) )   |^2
  + |  v  (t, \theta_{-t } \omega,v_0(\theta_{-t  } \omega) )   |^2
 \right )
 dx \le \epsilon.
$$
\end{lem}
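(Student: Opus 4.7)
The plan is to reduce this estimate to Lemma \ref{lem44} via the decomposition $u = \tilde{u} + z_1(\theta_t\omega)$ and $v = \tilde{v} + z_2(\theta_t\omega)$ fixed in \eqref{uv}. Since $\tilde{u}_0(\omega) = u_0(\omega) - z_1(\omega)$ and $\tilde{v}_0(\omega) = v_0(\omega) - z_2(\omega)$, I would observe that if $\{B(\omega)\}_{\omega \in \Omega} \in \mathcal{D}$, then, because $\|z_1(\omega)\|^2$ and $\|z_2(\omega)\|^2$ are tempered, the family $\tilde{B}(\omega)$ consisting of pairs $(\tilde{u}_0(\omega),\tilde{v}_0(\omega))$ associated with initial data in $B(\omega)$ is also a tempered random set, so Lemma \ref{lem44} applies to it.

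Then I would start from the elementary inequality
\begin{equation*}
|u(t,\theta_{-t}\omega,u_0(\theta_{-t}\omega))|^2 \le 2\,|\tilde{u}(t,\theta_{-t}\omega,\tilde{u}_0(\theta_{-t}\omega))|^2 + 2\,|z_1(\omega)|^2,
\end{equation*}
and the analogous one for $v$ with $z_2(\omega)$, and integrate over $\{|x|\ge R\}$. The first contribution on the right is controlled by Lemma \ref{lem44}: for any $\epsilon>0$ there exist $T_1(\omega,\epsilon)$ and $R_1(\omega,\epsilon)$ such that the integral over $\{|x|\ge R\}$ of $|\tilde{u}|^2+|\tilde{v}|^2$ is at most $\epsilon$ once $t \ge T_1$ and $R \ge R_1$.

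For the second contribution, I would use $z_j(\omega) = \phi_j\, y_j(\omega_j)$, so that
\begin{equation*}
\int_{|x|\ge R} |z_j(\omega)|^2\,dx = |y_j(\omega_j)|^2 \int_{|x|\ge R} |\phi_j(x)|^2\,dx.
\end{equation*}
Since $\phi_1 \in H^2(\R^n) \cap W^{2,p}(\R^n)$ and $\phi_2 \in H^1(\R^n)$, both $\phi_j$ lie in $L^2(\R^n)$, so the tail integral $\int_{|x|\ge R}|\phi_j|^2\,dx$ tends to $0$ as $R\to\infty$. Thus there exists $R_2(\omega,\epsilon)$ such that $\int_{|x|\ge R}(|z_1(\omega)|^2+|z_2(\omega)|^2)\,dx \le \epsilon$ whenever $R\ge R_2$.

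Combining these bounds with $T=T_1$ and $R=\max\{R_1,R_2\}$ yields the claim, with a harmless constant absorbed into $\epsilon$. There is no real obstacle here beyond bookkeeping: the work was already done in Lemma \ref{lem44}, and the novelty is only the passage from $(\tilde{u},\tilde{v})$ to $(u,v)$, which costs a controllable tail of $(z_1(\omega),z_2(\omega))$ that does not depend on $t$ since the shift has been absorbed by evaluating at $\theta_0\omega=\omega$.
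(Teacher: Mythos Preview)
Your argument is correct and matches the paper's proof almost exactly: split $u=\tilde u+z_1(\omega)$, $v=\tilde v+z_2(\omega)$, invoke Lemma~\ref{lem44} for the $(\tilde u,\tilde v)$ tails, and control the $z_j(\omega)$ tails using $\phi_j\in L^2(\R^n)$ (the paper quotes \eqref{RR4} and \eqref{z2} for this last step, but your direct argument is equivalent). One small remark: Lemma~\ref{lem44} is already stated for $B\in\mathcal{D}$ with $\tilde u_0,\tilde v_0$ derived from $u_0,v_0$, so there is no need to introduce a separate tempered set $\tilde B$.
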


\begin{proof}
Let $T $ and $R $ be the constants in Lemma \ref{lem44}.
By \eqref{RR4}  and \eqref{z2} we have, for all $t\ge T $ and $k \ge R $,
$$
\int_{|x| \ge R }
\left (   |z_1 (\omega)   |^2  + |z_2 (\omega)   |^2 \right ) dx
=\int_{|x| \ge R }
\left ( |\phi_1 (x) |^2 |y_1 (\omega_1)|^2  + |\phi_2 (x) |^2 |y_2 (\omega_2)|^2
\right ) dx
$$
\be
\label{p45_1}
\le {\frac {\epsilon}{  r(\omega)}}  \left  (  |y_1(\omega_1)|^2
+  |y_2(\omega_2)|^2   \right )
\le   \epsilon  .
\ee
Then by \eqref{p45_1} and Lemma \ref{lem44},
we get that, for all $t\ge T $ and $k \ge R $,
$$
 \int_{|x| \ge R }  \left (
| u (t, \theta_{-t } \omega,u_0(\theta_{-t  } \omega) )   |^2
+ | v (t, \theta_{-t } \omega,v_0(\theta_{-t  } \omega) )   |^2   \right )
 dx
 $$
 $$
 =
 \int_{|x| \ge R }  \left (
| \ut (t, \theta_{-t } \omega, \ut_0(\theta_{-t  } \omega) ) + z_1(\omega)  |^2
+ | \vt (t, \theta_{-t } \omega,\vt_0(\theta_{-t  } \omega) )  + z_2 (\omega)  |^2   \right )
 dx
$$
$$
\le
2  \int_{|x| \ge R }  \left (
| \ut (t, \theta_{-t } \omega, \ut_0(\theta_{-t  } \omega) ) |^2
+ | \vt (t, \theta_{-t } \omega,\vt_0(\theta_{-t  } \omega) ) |^2+ |z_1(\omega)  |^2
 + z_2 (\omega)  |^2   \right )
 dx \le 4 \epsilon,
 $$
 which completes the proof.
\end{proof}

Note that equation \eqref{v1} has no  any smoothing effect on the solutions,
that is, if the initial condition $\vt_0(\omega)$  is in $\h$ only,  then for any
$ t\ge 0$,  $\vt(t, \omega, \vt_0(\omega))$  only belongs
to $\h$, but not $\hone$. Therefore, the compactness of  Sobolev embeddings
cannot be used
directly  to derive the asymptotic compactness of the solution operator.
To overcome the difficulty, we  need to decompose the solution operator
as in the deterministic case.
Let $\vt _1$ and $\vt _2$ be the solutions of the following problems,
respectively,
\be
\label{v1_1}
\left \{
\begin{array}{l}
{\frac {d \vt_1}{dt}} + \delta \vt_1 =0,\vspace{3mm}\\
\vt_1 (0) =\vt_0,\\
\end{array}
\right.
\ee
and
\be
\label{v2_1}
\left \{
\begin{array}{l}
{\frac {d \vt_2}{dt}} + \delta \vt_2 = \beta \ut + h + \beta z_1 (\theta_t \omega)
,\vspace{3mm}\\
\vt_2 (0) = 0 .\\
\end{array}
\right.
\ee
Then we find that
$\vt = \vt_1 + \vt_2$.
Let  $(u_0(\omega), v_0(\omega)) \in B(\omega)$ with
$\{B(\omega)\}_{\omega \in \Omega} \in \mathcal{D}$,  and
  $\vt_0( \omega) = v_0(\omega) -z_2(\omega)$.
Then the solution $\vt_1(t, \omega, \vt_0(\omega))$ of
\eqref{v1_1} satisfies that
$$
\| \vt_1 (t,  \omega, \vt_0(\omega) ) \|^2 =
\| \vt_1 (t,  \omega,  v_0(\omega)  - z_2( \omega )  ) \|^2
=   e^{- \delta t} \|  v_0(\omega)  - z_2( \omega )\|^2,
$$
and
\be
\label{v1_solu}
\| \vt_1 (t, \theta_{-t} \omega,  v_0(\theta_{-t}\omega)  - z_2(\theta_{-t}  \omega ) ) \|^2
=   e^{- \delta t} \|  v_0(\theta_{-t}\omega)  - z_2(\theta_{-t}  \omega )\|^2
 \to 0, \quad \mbox{as} \
t \to \infty.
\ee
On the  other hand, for the solutions of problem \eqref{v2_1}, we have
the following  estimates.

\begin{lem}\label{lemv2}
Assume that $g   \in L^2(\R^n)$, $h \in \hone$,
 and \eqref{f1}-\eqref{f4}
hold.   Let   $ B= \{B(\omega)\}_{\omega \in \Omega}\in \mathcal{D}$.
Then  for  $P$-a.e. $\omega \in \Omega$,
there exists      $T = T _B(\omega)>0$
such that     for all $t \ge T  $,
$$
\| \nabla \vt_2 (t, \theta_{-t} \omega, 0) \|
\le c(1 + r(\omega) ),
$$
where $c$ is a positive deterministic constant and $r(\omega)$
is the tempered function in \eqref{z2}.
\end{lem}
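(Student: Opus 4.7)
The plan is to run an energy estimate in $\dot H^1$ for equation \eqref{v2_1}, then integrate the resulting linear ODE for $\|\nabla \tilde v_2\|^2$ using the Duhamel formula and control the right-hand side by invoking Lemma \ref{lem41a} for the $\nabla \tilde u$ contribution and \eqref{zz} for the $\nabla z_1$ contribution.

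More precisely, I would first apply $\nabla$ to \eqref{v2_1} and take the $L^2(\R^n)$ inner product with $\nabla \tilde v_2$, which gives
\be
\label{plan1}
\tfrac{1}{2}\tfrac{d}{dt}\|\nabla \tilde v_2\|^2 + \delta \|\nabla \tilde v_2\|^2
= \beta(\nabla \tilde u,\nabla \tilde v_2) + (\nabla h,\nabla \tilde v_2)
+ \beta(\nabla z_1(\theta_t\omega),\nabla \tilde v_2).
\ee
By Cauchy--Schwarz and Young's inequality this yields, after absorbing a $\tfrac{\delta}{2}\|\nabla \tilde v_2\|^2$ into the left-hand side,
\be
\label{plan2}
\tfrac{d}{dt}\|\nabla \tilde v_2\|^2 + \delta \|\nabla \tilde v_2\|^2
\le c\bigl(\|\nabla \tilde u\|^2 + \|\nabla h\|^2 + \|\nabla z_1(\theta_t\omega)\|^2\bigr),
\ee
which is available because $h\in H^1(\R^n)$ and $\phi_1\in H^2(\R^n)$. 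Since $\tilde v_2(0)=0$, Gronwall's inequality gives
\be
\label{plan3}
\|\nabla \tilde v_2(t,\omega,0)\|^2 \le c\int_0^t e^{-\delta(t-s)}
\Bigl(\|\nabla \tilde u(s,\omega,\tilde u_0(\omega))\|^2 + \|\nabla h\|^2 + \|\nabla z_1(\theta_s\omega)\|^2\Bigr) ds.
\ee

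Next I would replace $\omega$ by $\theta_{-t}\omega$. The $\|\nabla h\|^2$ piece contributes at most $\|\nabla h\|^2/\delta$. For the $z_1$ piece, write $\nabla z_1(\theta_{s-t}\omega)=\nabla\phi_1\, y_1(\theta_{s-t}\omega_1)$ and use \eqref{zz} to get $|y_1(\theta_{s-t}\omega_1)|^2 \le e^{\eta(t-s)/2} r(\omega)$ for $s\in[0,t]$; since $\eta\le\delta$, the integral $\int_0^t e^{-\delta(t-s)}e^{\eta(t-s)/2} ds$ is uniformly bounded and this piece contributes at most $c\, r(\omega)$. The main work is the $\|\nabla \tilde u\|^2$ piece: because $e^{-\delta(t-s)}\le e^{-\eta(t-s)}$ for $s\le t$, we have
$$\int_0^t e^{-\delta(t-s)}\|\nabla \tilde u(s,\theta_{-t}\omega,\tilde u_0(\theta_{-t}\omega))\|^2 ds
\le \int_0^t e^{\eta(s-t)}\|\nabla \tilde u(s,\theta_{-t}\omega,\tilde u_0(\theta_{-t}\omega))\|^2 ds,$$
and Lemma \ref{lem41a} (applied with $T_1=0$) bounds the right-hand side by
$c e^{-\eta t}(\|\tilde u_0(\theta_{-t}\omega)\|^2+\|\tilde v_0(\theta_{-t}\omega)\|^2)+c(1+r(\omega))$.

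Finally, since $\{B(\omega)\}\in\mathcal D$ is tempered and $\|z_j(\omega)\|^2$ are tempered, the term $e^{-\eta t}(\|\tilde u_0(\theta_{-t}\omega)\|^2+\|\tilde v_0(\theta_{-t}\omega)\|^2)$ is absorbed into $1+r(\omega)$ once $t\ge T_B(\omega)$. Combining, we obtain $\|\nabla \tilde v_2(t,\theta_{-t}\omega,0)\|^2 \le c(1+r(\omega))$, and since $\sqrt{1+r(\omega)}\le 1+r(\omega)$ the stated bound on $\|\nabla \tilde v_2(t,\theta_{-t}\omega,0)\|$ follows. The only delicate step is the reduction of the exponential weight from $e^{-\delta(t-s)}$ to $e^{-\eta(t-s)}$ so that Lemma \ref{lem41a} applies directly; everything else is a routine application of Gronwall and the tempering properties \eqref{zz}.
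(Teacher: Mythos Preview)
Your argument is correct and follows essentially the same route as the paper: test \eqref{v2_1} at the $\dot H^1$ level (the paper multiplies by $\Delta \tilde v_2$, which is the same identity after one integration by parts), obtain a linear differential inequality for $\|\nabla \tilde v_2\|^2$, integrate, replace $\omega$ by $\theta_{-t}\omega$, and control the three source terms via Lemma~\ref{lem41a}, \eqref{zz}, and $h\in H^1$. The only cosmetic difference is that the paper passes from $\delta$ to $\eta$ in the differential inequality before integrating, whereas you keep $\delta$ and use $e^{-\delta(t-s)}\le e^{-\eta(t-s)}$ afterwards; both are equivalent.
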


\begin{proof}
 Taking the inner product of \eqref{v2_1} with $\Delta \vt_2$ in $\h$, we get that
\be
\label{pv2_1}
 {\frac 12} {\frac d{dt}} \| \nabla \vt_2 \|^2
 + \delta \| \nabla \vt_2 \|^2 = -\beta (\ut,  \Delta \vt_2)
 - (h, \Delta \vt_2)
 -\beta (\zto, \Delta \vt_2).
\ee
Note that the first term on the right-hand side of \eqref{pv2_1} is bounded
by
\be \label{pv2_2}
\beta |(\ut,  \Delta \vt_2)|
\le \beta \| \nabla \ut \| \| \nabla \vt_2 \|
\le {\frac 18} \delta \| \nabla \vt_2 \|^2 + {\frac {2\beta^2}\delta} \| \nabla \ut \|^2.
\ee
For the second term on the right-hand side of \eqref{pv2_1}, we have
\be\label{pv2_3}
|(h, \Delta \vt_2)|
\le \| \nabla h \| \| \nabla \vt_2 \|
\le {\frac 18} \delta \| \nabla \vt_2 \|^2 + {\frac 2\delta } \| h \|^2_{H^1}.
\ee
The last term on the right-hand side of \eqref{pv2_1} is bounded  by
\be\label{pv2_4}
\beta  \| \nabla \zto \|  \| \nabla \vt_2 \|
\le {\frac 18} \delta \| \nabla \vt_2 \|^2 + {\frac {2\beta^2}\delta} \| \nabla \zto \|^2
\le {\frac 18} \delta \| \nabla \vt_2 \|^2 + c  | y_1 (\theta_t \omega_1)  |^2.
\ee
Then it follows from \eqref{pv2_1}-\eqref{pv2_4} that
$$
   {\frac d{dt}} \| \nabla \vt_2 \|^2
 + \delta \| \nabla \vt_2 \|^2
 \le c + c \| \nabla \ut \|^2 + c |y_1 (\theta_t \omega_1) |^2,
$$
which implies that, for all $t\ge 0$,
\be
\label{pv2_5}
   {\frac d{dt}} \| \nabla \vt_2 \|^2
 + \eta \| \nabla \vt_2 \|^2
 \le c + c \| \nabla \ut \|^2 + c |y_1 (\theta_t \omega_1) |^2.
\ee
Integrating \eqref{pv2_5} on $(0, t)$, we obtain that, for all $t\ge 0$,
$$
\| \nabla \vt_2 (t,   \omega, 0 ) \|^2
\le c \int_0^t e^{\eta (s-t)} ds
+ c \int_0^t e^{\eta (s-t)}
\| \nabla \ut (s,   \omega, \ut_0 (  \omega) )\|^2 ds
+ c \int_0^t e^{\eta (s-t)} | y_1 (\theta_{s } \omega_1 ) |^2 ds
$$
\be\label{pv2_6}
\le {\frac c\eta}
+ c \int_0^t e^{\eta (s-t)}
\| \nabla \ut (s,   \omega, \ut_0 (  \omega) )\|^2 ds
+ c \int_0^t e^{\eta (s-t)} | y_1 (\theta_{s } \omega_1 ) |^2 ds.
\ee
Replacing $\omega$ by $\theta_{-t} \omega$ in \eqref{pv2_6},
by \eqref{zz} and  Lemma \ref{lem41a}   we get that,
for all $t \ge 0$,
$$
\| \nabla \vt_2 (t, \theta_{-t} \omega, 0 ) \|^2
\le   {\frac c\eta}
+ c \int_0^t e^{\eta (s-t)}
\| \nabla \ut (s, \theta_{-t} \omega, \ut_0 (\theta_{-t} \omega) )\|^2 ds
+ c \int_0^t e^{\eta (s-t)} | y_1 (\theta_{s-t} \omega_1 ) |^2 ds
$$
$$
 \le c e^{-\eta t}
 \left ( \| \ut_0 (\theta_{-t} \omega )\|^2 +
  \| \vt_0 (\theta_{-t} \omega )\|^2  \right )
  + c (1 + r(\omega) )
  + c \int_{-t}^0 e^{\eta \tau} | y_1 (\theta_\tau \omega_1) |^2  d \tau.
$$
$$
 \le c e^{-\eta t}
 \left ( \| \ut_0 (\theta_{-t} \omega )\|^2 +
  \| \vt_0 (\theta_{-t} \omega )\|^2  \right )
  + c (1 + r(\omega) )
  + c r(\omega)  \int_{-t}^0 e^{{\frac 12} \eta \tau}   d \tau.
$$
$$
 \le c e^{-\eta t}
 \left ( \| \ut_0 (\theta_{-t} \omega )\|^2 +
  \| \vt_0 (\theta_{-t} \omega )\|^2  \right )
  + c (1 + r(\omega) ),
 $$
 which shows that there is $T_1>0$ such that for all $ t \ge T_1$,
 $$
\| \nabla \vt_2 (t, \theta_{-t} \omega, 0 ) \|^2
\le 2 (1 + r(\omega ) ).
$$
The proof is complete.
\end{proof}

\section{Random attractors}
\setcounter{equation}{0}

In this section, we prove the existence  of a $\mathcal{D}$-random
attractor for the random dynamical system $\Phi$ associated  with
the  stochastic FitzHugh-Nagumo system   on $R^n$. To this end, we
first   establish the
  $\mathcal{D}$-pullback asymptotic compactness of $\Phi$.

 In the sequel,  for every $t \in  \R^+, \omega \in \Omega$
 and $(u_0, v_0) \in \h \times \h$, we denote by
  $$
 \Phi_1 (t, \omega, (u_0, v_0) ) =  \left (0, \   \vt_1(t, \omega, v_0-z_2(\omega) )
\right ) \in \h \times \h ,
 $$
 and
 $$
 \Phi_2 (t, \omega, (u_0, v_0) ) =  \left ( u(t, \omega, u_0 ), \
 \vt_2 (t, \omega, 0) + z_2(\theta_t \omega ) \right ) \in \h \times \h,
 $$
 where $u(t, \omega, u_0 )$ is given by \eqref{uv},
 $ \vt_1(t, \omega, v_0-z_2(\omega) )$ is the solution
 of \eqref{v1_1} with $\vt_1(0) =  v_0-z_2(\omega)$, and
 $ \vt_2 (t, \omega, 0)$ is the solution of
 \eqref{v2_1} with $\vt_2 (0) =0$.
 Then we find that
 $$
 \Phi  (t, \omega, (u_0, v_0) )
= \Phi_1 (t, \omega, (u_0, v_0) )
+
\Phi_2 (t, \omega, (u_0, v_0) )  .
$$

The
  $\mathcal{D}$-pullback asymptotic compactness of $\Phi$
is given by the following lemma.

\begin{lem}
\label{lem51} Assume that $g \in L^2(\R^n)$,
$h \in \hone$
 and
\eqref{f1}-\eqref{f4} hold.  Then the random dynamical system
$\Phi$ is $\mathcal{D}$-pullback asymptotically compact in $\h \times \h$;
that is, for $P$-a.e. $\omega \in \Omega$,
the sequence $\{\Phi(t_n, \theta_{-t_n}\omega,
(u_{0,n} (\theta_{-t_n}  \omega), v_{0,n}(\theta_{-t_n}  \omega) ) \}$
 has a convergent subsequence in $\h \times \h$
 provided
    $t_n \to \infty$,   $ B= \{B(\omega)\}_{\omega \in
\Omega}\in \mathcal{D}$ and $(u_{0,n}(\theta_{-t_n}  \omega),
v_{0,n}(\theta_{-t_n} \omega)  ) \in B(\theta_{-t_n}
\omega)$.
\end{lem}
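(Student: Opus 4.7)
The plan is to exploit the decomposition $\Phi=\Phi_1+\Phi_2$ introduced just before the statement. The $\Phi_1$-summand decays in $\h\times\h$ norm by the linear evolution \eqref{v1_solu}, while the $\Phi_2$-summand will be shown to be precompact in $\h\times\h$ by combining uniform tail smallness with Rellich compactness on bounded balls. Combining these two conclusions yields the desired convergent subsequence of $\Phi(t_n,\theta_{-t_n}\omega,\cdot)$.

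For the first summand, writing $\vt_0 = v_{0,n}(\theta_{-t_n}\omega)-z_2(\theta_{-t_n}\omega)$, I would compute
\[
\|\Phi_1(t_n,\theta_{-t_n}\omega,\cdot)\|_{\h\times\h}^2
= \|\vt_1(t_n,\theta_{-t_n}\omega,\vt_0)\|^2
= e^{-2\delta t_n}\,\|v_{0,n}(\theta_{-t_n}\omega)-z_2(\theta_{-t_n}\omega)\|^2.
\]
Since $\{B(\omega)\}\in\mathcal{D}$ and $\omega\mapsto\|z_2(\omega)\|$ is tempered, both $\|v_{0,n}(\theta_{-t_n}\omega)\|^2$ and $\|z_2(\theta_{-t_n}\omega)\|^2$ grow subexponentially, so the factor $e^{-2\delta t_n}$ drives the right-hand side to $0$.

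For the second summand, I would prove that $\{\Phi_2(t_n,\theta_{-t_n}\omega,\cdot)\}$ is precompact in $\h\times\h$ by verifying the two classical ingredients. \emph{(a) Uniform tails.} For the $u$-component, Lemma \ref{lem45} directly gives that $\int_{|x|\ge R}|u(t_n,\theta_{-t_n}\omega,u_{0,n})|^2\,dx<\varepsilon$ for $t_n,R$ large. For the $v$-component of $\Phi_2$, I would use the algebraic identity
\[
\vt_2(t,\theta_{-t}\omega,0)+z_2(\omega)
= v(t,\theta_{-t}\omega,v_{0,n}(\theta_{-t}\omega))-\vt_1(t,\theta_{-t}\omega,\vt_0),
\]
so its tails are controlled by Lemma \ref{lem45} (for $v$) together with the already-established $\h$-convergence $\vt_1\to 0$. \emph{(b) Local $\hone$-boundedness.} Lemma \ref{lem43} furnishes a uniform bound on $\|\nabla u(t_n,\theta_{-t_n}\omega,u_{0,n})\|$, and Lemma \ref{lemv2} provides a uniform bound on $\|\nabla \vt_2(t_n,\theta_{-t_n}\omega,0)\|$; since $\phi_2\in\hone$ ensures $z_2(\omega)\in\hone$, the $v$-component of $\Phi_2$ is also bounded in $\hone$. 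Restricting to any ball $B_R=\{|x|\le R\}$ and invoking the compact embedding $\hone(B_R)\hookrightarrow L^2(B_R)$, I extract subsequences that converge in $L^2(B_R)\times L^2(B_R)$; a diagonal argument along $R_k\to\infty$, together with (a), upgrades this to convergence in $\h\times\h$.

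The main obstacle is the $v$-component, because equation \eqref{v1_a1} has no smoothing effect and the lemma cannot appeal to compactness of Sobolev embeddings for $v$ directly. That is precisely why the decomposition $\vt=\vt_1+\vt_2$ is introduced: $\vt_1$ carries the non-smooth initial data but vanishes exponentially, while $\vt_2$ inherits a $\hone$-bound from Lemma \ref{lemv2} by absorbing the (spatially regular) forcing $\beta\ut+h+\beta z_1(\theta_t\omega)$. Beyond this structural point, the remaining work is a straightforward tail-plus-compactness argument, and the added bookkeeping merely ensures all absorbing times $T_B(\omega)$ are dominated by $t_n$ for $n$ large.
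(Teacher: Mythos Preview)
Your proposal is correct and follows essentially the same route as the paper: decompose $\Phi=\Phi_1+\Phi_2$, use \eqref{v1_solu} to kill $\Phi_1$, and combine the tail estimates (Lemmas \ref{lem44}--\ref{lem45}) with the $H^1$-bounds (Lemmas \ref{lem43} and \ref{lemv2}, together with the absorbing-set bound of Lemma \ref{lem41}) plus Rellich compactness on bounded balls to obtain precompactness of $\Phi_2$. The only cosmetic difference is that the paper fixes a single radius $R_1$, extracts a weak limit $(\xi_1,\xi_2)$, and upgrades to strong convergence by splitting into $\{|x|\le R_1\}$ and $\{|x|\ge R_1\}$, whereas you outline a diagonal argument along $R_k\to\infty$; both are standard and equivalent here.
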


\begin{proof}
Suppose  that   $t_n \to \infty$,   $ B= \{B(\omega)\}_{\omega \in
\Omega}\in \mathcal{D}$ and
  $(u_{0,n}(\theta_{-t_n}  \omega),
v_{0,n}(\theta_{-t_n} \omega)  ) \in B(\theta_{-t_n}
\omega)$.
Then  by Lemma \ref{lem41} and \eqref{v1_solu},
we find that, for $P$-a.e. $\omega \in \Omega$,
$$ \{ \Phi_2 (t_n, \theta_{-t_n} \omega,
(u_{0,n}(\theta_{-t_n} \omega), v_{0, n}(\theta_{-t_n} \omega)) ) \}_{n=1}^\infty
\quad \mbox{ is bounded in } \   \h \times \h.
$$
Hence, there is $(\xi_1, \xi_2) \in \h \times \h$ such that, up to a subsequence,
\be
\label{p51_1}
\Phi_2 (t_n, \theta_{-t_n} \omega,
(u_{0,n}(\theta_{-t_n} \omega), v_{0, n}(\theta_{-t_n} \omega)) )
 \to (\xi_1,  \xi_2) \quad \mbox{weakly in } \ \h \times \h.
\ee
In what follows, we prove the weak convergence of \eqref{p51_1} is
actually strong convergence. Given $\epsilon>0$
 by \eqref{v1_solu} and Lemmas \ref{lem44}-\ref{lem45},
we find that  there is $T_1 =T_1 (B, \omega, \epsilon)$
 and $R_1 =R_1 (\omega, \epsilon)$ such that for all $t\ge T_1$,
 \be
 \label{p51_2}
 \int_{|x| \ge R_1} \left |
  \Phi_2  (t, \theta_{-t} \omega, ( u_0( \theta_{-t} \omega ),
v_0(\theta_{-t}\omega ) ) ) \right |^2 dx
  \le \epsilon.
  \ee
  Since $t_n \to \infty$, there is $N_1=N_1(B, \omega, \epsilon)$ such
  that $t_n \ge T_1$ for every $n \ge N_1$. Hence it follows from
  \eqref{p51_2} that for all $n \ge N_1$,
  \be
 \label{p51_3}
 \int_{|x| \ge R_1} \left |
  \Phi_2 (t_n, \theta_{-t_n} \omega,
( u_{0,n}(  \theta_{-t_n} \omega ),  v_{0,n}(\theta_{-t_n} \omega ) ) ) \right |^2 dx
  \le \epsilon.
  \ee
  On the other hand, By Lemmas \ref{lem41},  \ref{lem43}
and \ref{lemv2}, there is
  $T_2 =T_2(B, \omega)$ such that for all $t  \ge T_2$,
  \be
 \label{p51_4}
\|
  \Phi_2 (t, \theta_{-t} \omega, ( u_0( \theta_{-t} \omega ), v_0(\theta_{-t} \omega ) ) )
  \|^2 _{H^1(\R^n) \times  H^1(\R^n)}
  \le c(1 + r(\omega) ).
  \ee
  Let $N_2 =N_2(B, \omega)$ be large enough such that
  $t_n \ge T_2$ for $n \ge N_2$. Then by \eqref{p51_4} we find that, for all
  $n \ge N_2$,
  \be
 \label{p51_5}
\|
  \Phi_2  (t_n, \theta_{-t_n} \omega, ( u_{0,n}( \theta_{-t_n}  \omega ),
v_{0,n} (\theta_{-t_n} \omega ) ) )  \|^2 _{H^1(\R^n) \times H^1(\R^n)}
  \le c(1 + r(\omega) ).
  \ee
  Denote by $ Q_{R_1} =\{ x \in \R^n: |x| \le R_1\}$. By the compactness of embedding
  $H^1(Q_{R_1})  \times H^1(Q_{R_1})  \hookrightarrow L^2(Q_{R_1})
\times L^2(Q_{R_1})$,
it follows from \eqref{p51_5} that,  up to a subsequence,
$$
\Phi_2 (t_n, \theta_{-t_n} \omega,
(u_{0,n}(\theta_{-t_n} \omega), v_{0, n}(\theta_{-t_n} \omega)) )
 \to (\xi_1,  \xi_2) \quad \mbox{strongly  in } \ L^2(Q_{R_1}) \times L^2(Q_{R_1}),
$$
which shows that for the given $\epsilon>0$, there exists $N_3=N_3(B, \omega, \epsilon)$
such that for all $n \ge N_3$,
\be
\label{p51_5_a1}
\| \Phi_2 (t_n, \theta_{-t_n} \omega,
(u_{0,n}(\theta_{-t_n} \omega), v_{0, n}(\theta_{-t_n} \omega)) )
 - (\xi_1,  \xi_2) \|_{   L^2(Q_{R_1}) \times L^2(Q_{R_1})}^2 \le \epsilon.
\ee
Note that $(\xi_1, \xi_2) \in \h \times \h$. Therefore there exists
$R_2 =R_2(\epsilon) $ such that
\be
\label{p51_5_a2}
\int_{|x| \ge R_2} \left ( |\xi_1 (x) |^2 + |\xi_2 (x) |^2 \right ) dx
\le \epsilon.
\ee
Let $R_3 =\max \{ R_1, R_2\}$ and $N_4 = \max \{N_1, N_3\}$.
We find that
$$
\| \Phi_2 (t_n, \theta_{-t_n} \omega,
(u_{0,n}(\theta_{-t_n} \omega), v_{0, n}(\theta_{-t_n} \omega)) )
 - (\xi_1,  \xi_2) \|_{  \h \times \h}^2
 $$
 $$
 \le
\int_{|x| \le R_3} |  \Phi_2 (t_n, \theta_{-t_n} \omega,
(u_{0,n}(\theta_{-t_n} \omega), v_{0, n}(\theta_{-t_n} \omega)) )
 - (\xi_1,  \xi_2)  |^2  dx
 $$
 $$
+
\int_{|x| \ge  R_3} |  \Phi_2 (t_n, \theta_{-t_n} \omega,
(u_{0,n}(\theta_{-t_n} \omega), v_{0, n}(\theta_{-t_n} \omega)) )
 - (\xi_1,  \xi_2)  |^2  dx
 $$
 $$
 \le
\int_{|x| \le R_3} |  \Phi_2 (t_n, \theta_{-t_n} \omega,
(u_{0,n}(\theta_{-t_n} \omega), v_{0, n}(\theta_{-t_n} \omega)) )
 - (\xi_1,  \xi_2)  |^2  dx
 $$
 $$
  + 2
 \int_{|x| \ge  R_3} |  \Phi_2 (t_n, \theta_{-t_n} \omega,
(u_{0,n}(\theta_{-t_n} \omega), v_{0, n}(\theta_{-t_n} \omega)) )|^2 dx
+ 2   \int_{|x| \ge  R_3}
  ( |\xi_1 (x) |^2 + |   \xi_2 (x)  |^2  dx
  $$
By
 \eqref{p51_3}, \eqref{p51_5_a1}-\eqref{p51_5_a2}, it follows
from the above that,
for all $n \ge N_4$,
 $$
\| \Phi_2 (t_n, \theta_{-t_n} \omega,
(u_{0,n}(\theta_{-t_n} \omega), v_{0, n}(\theta_{-t_n} \omega)) )
 - (\xi_1,  \xi_2) \|_{  \h \times \h}^2 \le 5 \epsilon,
 $$
 which shows that
 \be
 \label{p51_10}
  \Phi_2 (t_n, \theta_{-t_n} \omega,
(u_{0,n}(\theta_{-t_n} \omega ), v_{0, n}(\theta_{-t_n} \omega ) ) )
 \to  (\xi_1,  \xi_2)
 \quad \mbox{strongly in} \ \h \times \h.
 \ee
On the other hand, we have
$$  \| \Phi  (t_n, \theta_{-t_n} \omega,
(u_{0,n}(\theta_{-t_n} \omega ), v_{0, n}(\theta_{-t_n} \omega ) ) )
  -   (\xi_1,  \xi_2) \|_{\h \times \h}
  $$
$$ \le   \|  \Phi_2  (t_n, \theta_{-t_n} \omega,
(u_{0,n}(\theta_{-t_n} \omega ), v_{0, n}(\theta_{-t_n} \omega ) ) )
  -   (\xi_1,  \xi_2) \|_{\h \times \h}
  $$
  $$
  + \| \Phi_1  (t_n, \theta_{-t_n} \omega,
(u_{0,n}(\theta_{-t_n} \omega ), v_{0, n}(\theta_{-t_n} \omega ) ) )\|_{\h \times \h}
  $$
  $$ \le   \|  \Phi_2  (t_n, \theta_{-t_n} \omega,
(u_{0,n}(\theta_{-t_n} \omega ), v_{0, n}(\theta_{-t_n} \omega ) ) )
  -   (\xi_1,  \xi_2) \|_{\h \times \h}
  $$
  $$
  + \| \vt_1  ( t_n, \theta_{-t_n} \omega,
  v_{0, n}( \theta_{-t_n} \omega )  - z_2(\theta_{-t_n} \omega) )
\|_{\h \times \h}
  $$
  $$ \le   \|  \Phi_2  (t_n, \theta_{-t_n} \omega,
(u_{0,n}(\theta_{-t_n} \omega ), v_{0, n}(\theta_{-t_n} \omega ) ) )
  -   (\xi_1,  \xi_2) \|_{\h \times \h}
  $$
\be
\label{p51_20}
  + e^{-\delta t_n} \| v_{0,n}   ( \theta_{-t_n} \omega )
    - z_2(\theta_{-t_n} \omega) )
\|_{\h }
\ee
Then, by \eqref{v1_solu},  \eqref{p51_10} and \eqref{p51_20}
we find  that
$$  \| \Phi  (t_n, \theta_{-t_n} \omega,
(u_{0,n}(\theta_{-t_n} \omega ), v_{0, n}(\theta_{-t_n} \omega ) ) )
  -   (\xi_1,  \xi_2) \|_{\h \times \h} \to 0,
  $$
  which completes the proof.
\end{proof}

We are now in a  position to present our main result: the
existence of a $\mathcal{D}$-random attractor for $\Phi$ in $\h \times \h$.

\begin{thm}
\label{thm52} Assume that $g \in L^2(\R^n)$, $h \in H^1(\R^n)$
 and
\eqref{f1}-\eqref{f4} hold.  Then the random dynamical system
$\Phi$ has a unique
  $\mathcal{D}$-random attractor in $\h \times \h$.
\end{thm}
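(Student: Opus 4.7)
The plan is to deduce Theorem \ref{thm52} directly from the abstract existence result Proposition \ref{att} by verifying its four hypotheses in sequence, as essentially all of the analytical work has already been carried out in Sections 3--5.

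First, I would note that the collection $\mathcal{D}$ of all tempered random subsets of $\h\times\h$ is inclusion-closed: if $\tilde D(\omega)\subseteq D(\omega)$ for all $\omega$, then $d(\tilde D(\theta_{-t}\omega))\le d(D(\theta_{-t}\omega))$, so $e^{-\beta t}d(\tilde D(\theta_{-t}\omega))\to 0$ for every $\beta>0$, i.e.\ $\tilde D\in\mathcal{D}$. Next, the construction in Section 3 (built from \eqref{u1}--\eqref{v1} via the transformations \eqref{uv} and definition \eqref{phi}) yields a continuous RDS $\Phi$ on $\h\times\h$ over $(\Omega,\mathcal{F},P,(\theta_t)_{t\in\R})$, satisfying conditions (i)--(iii) of Definition \ref{RDS}.

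Second, I would invoke Lemma \ref{lem41}, which produces a closed absorbing set $\{K(\omega)\}_{\omega\in\Omega}\in\mathcal{D}$ for $\Phi$ in $\mathcal{D}$. Closedness is immediate from the defining inequality, and membership in $\mathcal{D}$ follows because $r(\omega)$ and $\|z_j(\omega)\|^2$ are tempered (by \eqref{z3} and the fact that $\phi_j$ are fixed functions in $\h$). Third, Lemma \ref{lem51} provides the $\mathcal{D}$-pullback asymptotic compactness of $\Phi$ in $\h\times\h$: the decomposition $\Phi=\Phi_1+\Phi_2$ together with the tail estimates (Lemmas \ref{lem44}--\ref{lem45}), uniform $H^1$-bounds (Lemmas \ref{lem43} and \ref{lemv2}), and the exponential decay of $\Phi_1$ in \eqref{v1_solu} combine to yield a convergent subsequence.

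With these ingredients assembled, Proposition \ref{att} applies and yields a unique $\mathcal{D}$-random attractor
\[
\mathcal{A}(\omega)=\bigcap_{\tau\ge 0}\ \overline{\bigcup_{t\ge\tau}\Phi(t,\theta_{-t}\omega,K(\theta_{-t}\omega))}.
\]
Since every hypothesis has been fully verified in the earlier lemmas, there is essentially no remaining obstacle; the only point requiring any care is the bookkeeping check that $\{K(\omega)\}_{\omega\in\Omega}$ is indeed tempered (not merely bounded), so that absorbing sets in $\mathcal{D}$ are themselves elements of $\mathcal{D}$ and Proposition \ref{att} can be legitimately invoked.
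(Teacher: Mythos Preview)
Your proposal is correct and follows exactly the paper's own proof: invoke Lemma~\ref{lem41} for the closed absorbing set in $\mathcal{D}$, Lemma~\ref{lem51} for $\mathcal{D}$-pullback asymptotic compactness, and then apply Proposition~\ref{att}. The additional remarks you include (inclusion-closedness of $\mathcal{D}$, temperedness of $K(\omega)$) are reasonable sanity checks but are already implicit in the paper's setup.
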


\begin{proof}
Notice that $\Phi$ has a closed  absorbing set
$\{K(\omega)\}_{\omega \in \Omega}$ in $\mathcal{D}$ by Lemma
\ref{lem41}, and is $\mathcal{D}$-pullback asymptotically compact
in $\h \times \h$ by Lemma \ref{lem51}. Hence the existence of a unique
$\mathcal{D}$-random attractor for $\Phi$ follows from Proposition
\ref{att}
 immediately.
\end{proof}

 \end{document}